\documentclass[a4paper,12pt,reqno]{amsart}
\usepackage{amsfonts}
\usepackage{amsmath,array}
\usepackage{amssymb}
\usepackage{mathrsfs}
\usepackage[colorlinks]{hyperref}
 \usepackage{graphicx}
\usepackage{enumerate}
\usepackage[usenames]{color}
\usepackage{comment}
\makeatletter
\newcommand{\setword}[2]{%
  \phantomsection
  #1\def\@currentlabel{\unexpanded{#1}}\label{#2}%
}
\makeatother
\newtheorem{thm}{Theorem}[section]

\newtheorem{lem}[thm]{Lemma}

\newtheorem{remark}[thm]{Remark}
\numberwithin{equation}{section}
\usepackage[english]{babel} 
\usepackage{blindtext}

\theoremstyle{definition}
\newtheorem{definition}[thm]{Definition}

\begin{document}

\allowdisplaybreaks 
 \title[Mixed local-nonlocal quasilinear problems]{Mixed local-nonlocal quasilinear problems with mixed interpolated Hardy potential}

\author[Yergen Aikyn, Sekhar Ghosh, Vishvesh Kumar, and Michael Ruzhansky]{Yergen Aikyn, Sekhar Ghosh, Vishvesh Kumar, and Michael Ruzhansky}

\address[Yergen Aikyn]{Department of Mathematics: Analysis, Logic and Discrete Mathematics, Ghent University, Ghent, Belgium}
\email{aikynyergen@gmail.com}

\address[ Sekhar Ghosh]{Department of Mathematics, National Institute of Technology Calicut, Kozhikode, Kerala, India - 673601}
\email{sekharghosh1234@gmail.com / sekharghosh@nitc.ac.in}

\address[Vishvesh Kumar]{Department of Mathematical Sciences,
		Indian Institute of Technology (BHU),
		Varanasi, Uttar Pradesh, 221005, India.}
\email{vishveshmishra@gmail.com }

\address[Michael Ruzhansky]{Department of Mathematics: Analysis, Logic and Discrete Mathematics, Ghent University, Ghent, Belgium\newline and \newline
School of Mathematical Sciences, Queen Marry University of London, United Kingdom}
\email{michael.ruzhansky@ugent.be}
\date{}

\begin{abstract}
This paper addresses the existence of nontrivial solutions to a class of mixed local-nonlocal problems involving a mixed interpolated Hardy potential. We first establish a concentration-compactness principle for mixed local and nonlocal operators. This result is combined with Ricceri's variational principle to obtain an existence result for quasilinear elliptic problems under different growth assumptions on the nonlinearity. Furthermore, we apply the classical mountain pass theorem to obtain a second existence result in the superlinear case.
\end{abstract}

\keywords{Mixed local-nonlocal operator, Hardy potential, Critical point, Concentration-Compactness principle, Variational methods, Hardy inequality}
\subjclass{35J20, 35J75, 35J92, 35R11}

\maketitle

\section{Introduction and main results}

Recently, significant attention has been devoted to nonlinear problems driven by mixed local--nonlocal operators. These models arise naturally in a variety of applications, including population dynamics, finance, and control theory, where both diffusion effects and nonlocal interactions must be taken into account. From a mathematical viewpoint, the study of such operators is delicate due to the lack of scaling invariance and the combination of local and nonlocal features. We refer to \cite{BC-2013A2, DLV-2023A1, DV-2021A1} and the references therein for a comprehensive overview of these developments and their applications.

In this regard, substantial work has been done on the study of the problem
\begin{equation*}
    \begin{cases}-\Delta_{p} +(-\Delta_p)^{s} u=f(x, u) & \text { in } \Omega,  \\ u=0 & \text { on } \mathbb{R}^{N} \backslash \Omega ,\end{cases}
\end{equation*}
where $p> 1$, $\Omega$ is a smooth bounded domain in $\mathbb{R}^N$, and $f(x, u)$ is a nonlinearity satisfying  different types of growth conditions. Here $-\Delta_{p}$ is the $p$-Laplace operator given by
$$-\Delta_{p} u:=-\operatorname{div}\left(|\nabla u|^{p-2} \nabla u\right)$$ 
and $\left(-\Delta_p\right)^{s}$ is its fractional counterpart which is defined as
\begin{equation*}
    \left(-\Delta_p\right)^{s} u(x):=2 c_{N,p,s} \lim _{\epsilon \rightarrow 0} \int_{\mathbb{R}^{N} \backslash B_{\varepsilon}(x)} \frac{|u(x)-u(y)|^{p-2}(u(x)-u(y))}{|x-y|^{N+p s}} d y \quad \text{ for all } x \in \mathbb{R}^{N},
\end{equation*}
for any $u \in C_{0}^{\infty}\left(\mathbb{R}^{N}\right)$.
 We refer the interested reader, among other works, to the papers \cite{BDVV-2025, DFZ-2024, GK-2022, GL-2023, SFV-2024} and to the references therein. 

Compared with the classical mixed local-nonlocal case, the study of such operators in the presence of the Hardy potential is still underdeveloped. The first work on this topic is due to Biagi et al. \cite{BEMV-2025}, where the authors investigated the existence, uniqueness, and optimal summability of solutions to the following problem
\begin{equation}\label{Biagi pr}
    \begin{cases} -\Delta u+\left(-\Delta\right)^{s} u-\gamma \frac{u}{|x|^{2 }}=f(x) & \text { in } \Omega,  \\ u=0 & \text { on } \mathbb{R}^{N} \backslash \Omega .\end{cases}
\end{equation}
Continuing this line of research, Malhotra et al. \cite{MGS2025ArX} studied the Brezis-Nirenberg type problem associated with the operator given in \eqref{Biagi pr}. A primary reason for considering this operator is that the minimizing value of the following minimization problem
\begin{equation*}
    \Gamma(\Omega):=\inf \left\{\|\nabla u\|_{L^2\left(\mathbb{R}^N\right)}^2+\frac{c_{s,N}}{2}[u]_s^2: u \in C_0^{\infty}(\Omega) \text { with } \int_{\mathbb{R}^N} \frac{u^2}{|x|^2} d x=1\right\}
\end{equation*}
coincides with the optimal local Hardy constant. Recently, Malhotra et al. \cite{MGS2025} studied the eigenvalue problem driven by the mixed local and nonlocal $p$-Laplacian operator involving the interpolated Hardy potential. More precisely, they considered the following operator 
\begin{equation}\label{main operator hardy}
    \mathcal{T}(u):=-\Delta_{p} u+\left(-\Delta_p\right)^{s} u-\mu \frac{u^{p-1}}{|x|^{p \theta}},
\end{equation}
where $0<s<1<p<N$, $\theta \in[s, 1]$ with $s\in(0,1)$. 
The Hardy potential appearing in the definition of the operator $\mathcal{T}$ is motivated by the interaction between the classical and nonlocal Hardy inequalities. A key feature of this approach is that it leads to an interpolated Hardy inequality established in \cite{MGS2025}. In this context, the following constant $\bar{\mu}(\theta)$ arises, which is defined by
\begin{equation}\label{best Hardy constant}
\bar{\mu}(\theta):= \begin{cases}C_{N, p, s} & \text { if } \theta=s, \\ \min \left\{\frac{C_{H}(1-s)}{\theta-s}, \frac{C_{N, p, s}(1-s)}{1-\theta}\right\} & \text { if } \theta \in(s, 1), \\ C_{H} & \text { if } \theta=1,\end{cases}
\end{equation}
where $C_{H}$ and $C_{N, p, s}$ are the best constants in classical and fractional Hardy inequalities, respectively.

Motivated by the above works, we investigate existence results for certain Dirichlet problems involving the operator $\mathcal{T}$ as in \eqref{main operator hardy}. As usual, the main difficulty in problems involving the Hardy potential is the lack of compactness of the Hardy embedding. This poses several difficulties for the application of variational methods. The standard tool to handle this lack of compactness is the concentration-compactness principle of Lions \cite{Lions-1985}, which is based on the study of weakly convergent sequences in measure spaces.

We prove the following concentration-compactness principle for mixed local-nonlocal operators.
\begin{thm}\label{CCP thm}
Let $\Omega$ be an open bounded subset of $\mathbb{R}^{N}$ with $0\in \Omega$ and $X(\Omega)$ be the Sobolev space as defined in Section \ref{sec2}. Let $\{u_{k}\}$ be a sequence in $X(\Omega)$ such that $u_k\rightharpoonup u$ weakly for some $u\in X(\Omega)$. Then there exist two finite measures $\omega$ and $\sigma$ in $\mathbb{R}^{N}$ such that
\begin{align}\label{ccp pr 1}
    \begin{split}
        &\left|\nabla u_{k}(x)\right|^{p} d x +\left(\int_{\mathbb{R}^N}\frac{|u_k(x)-u_k(y)|^p}{|x-y|^{N+ps}}dy\right)dx\stackrel{*}{\rightharpoonup} \omega \quad \text { and }\\
        &\frac{\left|u_{k}(x)\right|^{p}}{|x|^{p\theta}} dx\stackrel{*}{\rightharpoonup} \sigma \quad \text { in } \mathcal{M}\left(\mathbb{R}^{N}\right). 
    \end{split}
\end{align}
Furthermore, there exist two nonnegative numbers $\omega_{0}, \sigma_{0}$ such that
\begin{equation}\label{ccp pr 2}
\sigma= \frac{|u(x)|^{p}}{|x|^{p\theta}}dx+\sigma_{0} \delta_{0} 
\end{equation}
and
\begin{equation}\label{ccp pr 3}
\omega \geq\left|\nabla u(x)\right|^{p} d x+\left(\int_{\mathbb{R}^N}\frac{|u(x)-u(y)|^p}{|x-y|^{N+ps}}dy\right)dx+\omega_{0} \delta_{0}, \quad 0 \leq \bar{\mu}(\theta) \sigma_{0} \leq \omega_{0}, 
\end{equation}
where $\bar{\mu}(\theta)$ is the Hardy constant defined in \eqref{best Hardy constant}.
\end{thm}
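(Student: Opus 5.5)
The plan is to follow Lions' scheme, as adapted to the Hardy setting. The first step is boundedness. Since $u_k\rightharpoonup u$ in $X(\Omega)$, the norms $\|u_k\|_{X}$ are bounded. By the interpolated Hardy inequality of \cite{MGS2025} (with constant $\bar\mu(\theta)$), the weighted norms $\int |u_k|^p|x|^{-p\theta}dx$ are bounded too. Both families of densities in \eqref{ccp pr 1} are therefore bounded in $\mathcal M(\mathbb R^N)$, and Banach--Alaoglu gives a subsequence with weak-$*$ limits $\omega$ and $\sigma$, both finite.

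The lower bound on $\omega$ comes from weak lower semicontinuity. For $\phi\in C_c(\mathbb R^N)$ with $\phi\ge 0$, the maps
\[
v\mapsto\int\phi|\nabla v|^p \quad\text{and}\quad v\mapsto\int\phi(x)\int\frac{|v(x)-v(y)|^p}{|x-y|^{N+ps}}dy\,dx
\]
are convex and continuous, hence weakly l.s.c. This gives $\omega\ge |\nabla u|^pdx+(\int\cdots dy)dx$.

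Next, set $v_k=u_k-u$. By Rellich we have $u_k\to u$ in $L^p$, and a.e. after a further subsequence. Brezis--Lieb, applied locally away from the origin, gives
\[
\frac{|u_k|^p}{|x|^{p\theta}}-\frac{|v_k|^p}{|x|^{p\theta}}\to\frac{|u|^p}{|x|^{p\theta}}\quad\text{in }L^1 .
\]
Hence $\sigma-\frac{|u|^p}{|x|^{p\theta}}dx=\nu$, where $\nu$ is the weak-$*$ limit of $|v_k|^p|x|^{-p\theta}dx$. Away from $0$ the weight is bounded, so for $\phi$ supported in $\mathbb R^N\setminus B_\varepsilon$ we get $\int\phi|v_k|^p|x|^{-p\theta}\to0$. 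Thus $\nu$ is concentrated at $0$, i.e.\ $\nu=\sigma_0\delta_0$, which is \eqref{ccp pr 2}.

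The key step is the inequality $\bar\mu(\theta)\sigma_0\le\omega_0$. Define $\omega_0:=\lim_{\varepsilon\to0}\omega(B_\varepsilon)$; by the l.s.c. step this equals the atomic part of $\omega$ at $0$, so \eqref{ccp pr 3} holds. Take a cutoff $\phi_\varepsilon(x)=\phi(x/\varepsilon)$ with $\phi\in C_c^\infty(B_1)$, $0\le\phi\le1$, and $\phi=1$ on $B_{1/2}$. Apply the interpolated Hardy inequality to $\phi_\varepsilon u_k\in X(\Omega)$:
\[
\bar\mu(\theta)\int\frac{|\phi_\varepsilon u_k|^p}{|x|^{p\theta}}\le\int|\nabla(\phi_\varepsilon u_k)|^p+[\phi_\varepsilon u_k]_{s,p}^p .
\]
Let $k\to\infty$ and then $\varepsilon\to0$. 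The left side tends to $\bar\mu(\theta)\int\phi_\varepsilon^p d\sigma\to\bar\mu(\theta)\sigma_0$.

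For the local gradient term, Minkowski splits it into $\int\phi_\varepsilon^p|\nabla u_k|^p$ plus error terms involving $u_k\nabla\phi_\varepsilon$. Since $\int|u_k|^p|\nabla\phi_\varepsilon|^p\to\int|u|^p|\nabla\phi_\varepsilon|^p\le C\big(\int_{B_\varepsilon}|u|^{p^*}\big)^{p/p^*}\to0$ by Hölder, these errors vanish.

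The nonlocal term is the main obstacle. Write
\[
\phi_\varepsilon(x)u_k(x)-\phi_\varepsilon(y)u_k(y)=\phi_\varepsilon(x)(u_k(x)-u_k(y))+u_k(y)(\phi_\varepsilon(x)-\phi_\varepsilon(y)).
\]
The first piece gives at most $\int\phi_\varepsilon^p(x)\int\frac{|u_k(x)-u_k(y)|^p}{|x-y|^{N+ps}}dy\,dx$, using the elementary inequality $|a+b|^p\le(1+\delta)|a|^p+C_\delta|b|^p$. The cross term is
\[
\iint\frac{|u_k(y)|^p|\phi_\varepsilon(x)-\phi_\varepsilon(y)|^p}{|x-y|^{N+ps}}dx\,dy .
\]
I would show it converges as $k\to\infty$ to the same expression with $u$, and that this tends to $0$ as $\varepsilon\to0$, following the standard fractional CCP arguments (e.g.\ Palatucci--Pisante, Bonder et al.). The kernel $K_\varepsilon(y)=\int|\phi_\varepsilon(x)-\phi_\varepsilon(y)|^p|x-y|^{-N-ps}dx$ satisfies $K_\varepsilon(y)\le C\min\{\varepsilon^{-ps},\varepsilon^{N}|y|^{-N-ps}\}$. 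The convergence in $k$ follows from $u_k\to u$ in $L^p$ together with tail control: on bounded sets $K_\varepsilon$ is bounded, and outside $\Omega$ we have $u_k=0$. The limit $\int|u|^pK_\varepsilon\to0$ then follows by Hölder with $L^{p^*_s}$ on $B_{R\varepsilon}$ and the decay bound outside.

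Combining these, the right side tends to $\lim_\varepsilon\int\phi_\varepsilon^pd\omega\cdot(1+\delta)=(1+\delta)\omega_0$. Letting $\delta\to0$ yields $\bar\mu(\theta)\sigma_0\le\omega_0$.
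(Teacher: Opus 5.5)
Your proof is correct. Its core is exactly the paper's route to $\bar\mu(\theta)\sigma_0\le\omega_0$: apply \eqref{mixed Hardy use} to $\phi_\varepsilon u_k$, split with $|a+b|^p\le(1+\delta)|a|^p+C_\delta|b|^p$, and show that $\int|u|^p|\nabla\phi_\varepsilon|^p$ and $\int|u|^pK_\varepsilon$ vanish as $\varepsilon\to0$. The one difference there is that the paper defers the nonlocal cross term to the literature, while you sketch the kernel bound $K_\varepsilon(y)\le C\min\{\varepsilon^{-ps},\varepsilon^N|y|^{-N-ps}\}$ yourself.

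The surrounding steps differ, and your versions are leaner or more precise.

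\textbf{Existence of $\omega,\sigma$.} The paper obtains them via tightness and Prokhorov's theorem. Banach--Alaoglu, together with separability of $C_0(\mathbb R^N)$, is all that weak-$*$ convergence requires.

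\textbf{Identifying $\sigma$.} The paper proves $\widehat\sigma\le\frac{1+\varepsilon}{\bar\mu(\theta)}\widehat\omega$ for $v_k=u_k-u$ and invokes Lions' Lemma 1.2 to make $\widehat\sigma$ atomic, before showing it sits at $0$. You observe that vanishing on $\mathbb R^N\setminus\{0\}$ already forces $\nu=\sigma_0\delta_0$, so the reverse H\"older step and Lions' lemma are superfluous. In fact Brezis--Lieb is not needed either: strong $L^p$ convergence with a weight bounded away from $0$ identifies $\sigma$ off the origin, and one sets $\sigma_0=\sigma(\{0\})$. If you do use Brezis--Lieb, it holds globally in $L^p(\Omega,|x|^{-p\theta}dx)$ thanks to the Hardy bound, so ``applied locally away from the origin'' is not the right qualifier.

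\textbf{Lower bound on $\omega$.} For $\omega\ge|\nabla u|^pdx+(\int\cdots dy)\,dx$, the paper invokes weak lower semicontinuity of the global norm, which by itself only compares total masses. Your localized lower semicontinuity, with nonnegative $\phi\in C_c(\mathbb R^N)$, is what the measure inequality actually needs. Combined with the mutual singularity of that absolutely continuous measure and $\delta_0$, it gives \eqref{ccp pr 3} with $\omega_0=\omega(\{0\})$. Note that $\omega_0$ is the atom at $0$ by definition; the lower semicontinuity step is what supplies the inequality, not the identification of $\omega_0$.
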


Theorem \ref{CCP thm} can be established by adapting the method used in \cite[Theorem 1.1]{F-P} (see also
\cite[Lemma 3.1]{Montefusco}). 

As an immediate consequence of Theorem \ref{CCP thm}, we establish that the functional
\begin{equation}\label{wlsc func1}
    \mathcal{I}_{\mu}(u):=\frac{1}{p}\left(\int_{\Omega}\left|\nabla u\right|^p dx+\iint_{\mathbb{R}^{2N}} \frac{|u(x)-u(y)|^{p}}{|x-y|^{N+sp}} dxdy-\mu \int_{\Omega} \frac{\left|u\right|^p}{|x|^{p\theta}} dx\right)
\end{equation}
is weakly lower semicontinuous and coercive in $X(\Omega)$, provided that $\mu<\bar{\mu}(\theta)$. We note that this method of proving weak lower semicontinuity of functionals using the concentration–compactness principle goes back to Montefusco \cite{Montefusco}.

Next, we consider the following nonlinear problem 
\begin{equation} \label{main problem 0}
    \begin{cases}-\Delta_{p}u+(-\Delta_p)^{s} u=\mu \frac{|u|^{p-2}u}{|x|^{p \theta}}+\lambda f(x,u) & \text { in } \Omega, \\ u=0 & \text { in } \mathbb{R}^{N} \backslash \Omega,\end{cases}
\end{equation}
where $\Omega \subset \mathbb{R}^{N}$ is a bounded domain containing the origin with smooth boundary, $\lambda>0$, $0<s<1<p<N,$ $\theta \in[s, 1]$, $\mu \in (0, \bar{\mu}(\theta))$, and $f: \Omega \times \mathbb{R} \rightarrow \mathbb{R}$ is a Carath\'eodory function (that is, $f$ is measurable in $x\in\Omega$ and continuous in $t\in\mathbb{R})$, satisfying the following condition
\begin{equation}\label{cond on f}
|f(x, t)| \leq a_{1}+a_{2}|t|^{q-1}, \quad \forall(x, t) \in \Omega \times \mathbb{R},
\end{equation}
where $a_{1}, a_{2}$ are non-negative constants and $q \in (1, p N /(N-p))$.  

The main result concerning problem \eqref{main problem 0} reads as follows.
\begin{thm}\label{main result thm}
   Assume that $f: \Omega \times \mathbb{R} \rightarrow \mathbb{R}$ is a Carath\'eodory function satisfying $f(x, 0) \neq 0$ in $\Omega$ and condition \eqref{cond on f}. Then, for every $\mu \in(0, \bar{\mu}(\theta))$ there exists a constant $\Lambda>0$ defined by
\begin{equation}\label{value of lambdamu}
\Lambda:=q \sup _{\sigma>0}\left(\frac{\sigma^{p-1}}{q a_{1} C_{1}\left(\frac{p \bar{\mu}(\theta)}{\bar{\mu}(\theta)-\mu}\right)^{1 / p}+a_{2} C_{q}^{q}\left(\frac{p \bar{\mu}(\theta)}{\bar{\mu}(\theta)-\mu}\right)^{q / p} \sigma^{q-1}}\right),
\end{equation}
such that problem \eqref{main problem 0} admits at least one nontrivial weak solution $u_{\lambda} \in  X(\Omega)$ for every $\lambda \in( 0, \Lambda)$, where $C_r>0$ denotes the optimal embedding constant in $X(\Omega)\hookrightarrow L^{r}(\Omega)$ for $1\leq r\leq q$. Moreover, we have
\begin{equation*}
\lim _{\lambda \rightarrow 0^{+}}\left\|u_{\lambda}\right\|_{X(\Omega)}=0
\end{equation*}
and the map $\lambda \mapsto \mathcal{J}_{\lambda, \mu}\left(u_{\lambda}\right)$ is negative and strictly decreasing in $(0, \Lambda)$, where $\mathcal{J}_{\lambda, \mu}$ is defined as in \eqref{def j}.
\end{thm}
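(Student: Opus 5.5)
We will prove this with Ricceri's variational principle in the Bonanno--Molica Bisci form. Write $\mathcal{J}_{\lambda,\mu}=\Phi-\lambda\Psi$ on $X(\Omega)$, where
\[
\Phi(u)=\mathcal{I}_\mu(u), \qquad \Psi(u)=\int_\Omega F(x,u)\,dx, \qquad F(x,t)=\int_0^t f(x,\tau)\,d\tau .
\]
The principle needs the following ingredients.
\begin{enumerate}
\item $\Phi$ is weakly lower semicontinuous and coercive. Both facts follow from Theorem \ref{CCP thm}, as remarked after \eqref{wlsc func1}, because $\mu<\bar{\mu}(\theta)$.
\item $\Psi$ is sequentially weakly (upper semi)continuous. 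This holds because $X(\Omega)\hookrightarrow L^q(\Omega)$ is compact for $q<p^*$, and by \eqref{cond on f} and dominated convergence, $\Psi$ is even $C^1$ with compact derivative.
\item $\Phi,\Psi$ are $C^1$ (Gâteaux differentiable), so critical points of $\mathcal{J}_{\lambda,\mu}$ are weak solutions of \eqref{main problem 0}.
\end{enumerate}
The interpolated Hardy inequality gives $\mu\int_\Omega |u|^p|x|^{-p\theta}\le (\mu/\bar{\mu}(\theta))\|u\|^p$. Hence
\[
\Phi(u)\ge \frac{\bar{\mu}(\theta)-\mu}{p\bar{\mu}(\theta)}\|u\|^p,
\]
so $\Phi(u)<r$ implies $\|u\|<\bigl(\frac{p\bar{\mu}(\theta)}{\bar{\mu}(\theta)-\mu}r\bigr)^{1/p}$. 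This is exactly where the factor in \eqref{value of lambdamu} comes from.

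Next we estimate $\varphi(r):=\inf_{\Phi(u)<r}\frac{\sup_{\Phi(v)<r}\Psi(v)-\Psi(u)}{r-\Phi(u)}\le \frac{\sup_{\Phi<r}\Psi}{r}$, using $u=0$. From \eqref{cond on f}, $|F(x,t)|\le a_1|t|+\frac{a_2}{q}|t|^q$, so
\[
\Psi(v)\le a_1C_1\|v\|+\frac{a_2}{q}C_q^q\|v\|^q .
\]
Set $\sigma=r^{1/p}$ and $K=\frac{p\bar{\mu}(\theta)}{\bar{\mu}(\theta)-\mu}$. Then
\[
\varphi(r)\le \frac{a_1C_1K^{1/p}\sigma+\frac{a_2}{q}C_q^qK^{q/p}\sigma^q}{\sigma^p}
=\frac{qa_1C_1K^{1/p}+a_2C_q^qK^{q/p}\sigma^{q-1}}{q\,\sigma^{p-1}}.
\]
Given $\lambda<\Lambda$, choose $\bar\sigma$ with $\lambda<q\bar\sigma^{p-1}/(\cdots)$. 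Then $\lambda<1/\varphi(\bar r)$ with $\bar r=\bar\sigma^p$. Ricceri's principle then yields a local minimizer $u_\lambda$ of $\mathcal{J}_{\lambda,\mu}$ in the open sublevel set $\Phi^{-1}((-\infty,\bar r))$, and $u_\lambda$ is a critical point. It is nontrivial because $f(x,0)\neq0$ implies that $0$ does not solve \eqref{main problem 0}.

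For the qualitative properties we work on the fixed ball $\Phi<\bar r$, where $\|u_\lambda\|$ is bounded. First, $u_\lambda$ minimizes $\mathcal{J}_{\lambda,\mu}$ restricted to this ball, so $\mathcal{J}_{\lambda,\mu}(u_\lambda)\le \mathcal{J}_{\lambda,\mu}(0)=0$. This gives
\[
\Phi(u_\lambda)\le\lambda\Psi(u_\lambda)\le \lambda M
\]
with $M$ bounded, and hence $\|u_\lambda\|\to0$ as $\lambda\to0^+$.

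To get $\mathcal{J}_{\lambda,\mu}(u_\lambda)<0$, we need a $w$ in the ball with $\Psi(w)>0$. Since $f(\cdot,0)\neq0$, take $w=\varepsilon v$ with $v\in C_0^\infty$ chosen so that $\int f(x,0)v>0$. Then $\Psi(\varepsilon v)\sim\varepsilon\int f(x,0)v$ dominates $\Phi(\varepsilon v)=O(\varepsilon^p)$, so $\mathcal{J}_{\lambda,\mu}(\varepsilon v)<0$ for small $\varepsilon$. Choosing such a $v$ needs care because $f(x,0)$ is merely measurable, and this is a mild technical point.

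Monotonicity follows from the standard argument. Write $m(\lambda)=\inf_{\Phi<\bar r}(\Phi-\lambda\Psi)$ and rewrite it as $\lambda\inf(\Phi/\lambda-\Psi)$. For $\lambda_1<\lambda_2$ we have $\Psi(u_{\lambda_1})>0$, so
\[
m(\lambda_2)\le \Phi(u_{\lambda_1})-\lambda_2\Psi(u_{\lambda_1})<m(\lambda_1).
\]

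The main obstacle is making the local minimizer independent of the choice of $\bar r$ for the monotonicity and limit claims. We handle this by fixing $\bar r$ once for all $\lambda$ in a given subinterval. This works because the same $\bar\sigma$ serves every $\lambda'<\lambda$.
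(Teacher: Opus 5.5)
Your proposal is correct and follows the paper's route: Ricceri's principle with $\Phi=\mathcal{I}_\mu$, $\Psi=\mathcal{P}$ and $\rho=\bar\sigma^{p}$, and the bound $\varphi(\bar\sigma^{p})\le\psi(\bar\sigma^{p})$ obtained by testing with $u=0$. It also uses the same small multiple of a direction with $\langle\mathcal{P}'(0),v\rangle>0$ for negativity, and the same comparison for strict monotonicity (your $m(\lambda_2)\le\Phi(u_{\lambda_1})-\lambda_2\Psi(u_{\lambda_1})<m(\lambda_1)$ is the paper's $\mathcal{E}_{\lambda}$ argument written directly). The one minor difference is in $\|u_\lambda\|_X\to0$: you use minimality over the fixed sublevel set, $\mathcal{I}_\mu(u_\lambda)\le\lambda\mathcal{P}(u_\lambda)\le\lambda M$, where the paper uses the identity $p\,\mathcal{I}_\mu(u_\lambda)=\lambda\langle\mathcal{P}'(u_\lambda),u_\lambda\rangle$; both give $\mathcal{I}_\mu(u_\lambda)=O(\lambda)$, and the coercivity estimate then finishes.
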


The method we use to establish Theorem \ref{main result thm} is similar to the approach of Ferrara and Bisci \cite{FB-2014} and is based on the weak lower semicontinuity of the functional given in \eqref{wlsc func1}. We also mention that a fractional counterpart of this argument was developed in \cite{AB-2023}.

The last part of this work is devoted to the study of the particular case of problem \eqref{main problem 0} when $f(x,u):=|u|^{r-2}u$ and $p<r<p^{*}$. More precisely, we consider the problem
\begin{equation} \label{main problem 0.1}
    \begin{cases}-\Delta_{p}u+(-\Delta_p)^{s} u=\mu \frac{|u|^{p-2}u}{|x|^{p \theta}}+\lambda|u|^{r-2}u & \text { in } \Omega, \\ u=0 & \text { in } \mathbb{R}^{N} \backslash \Omega,\end{cases}
\end{equation}
where $\Omega \subset \mathbb{R}^{N}$ is a bounded domain containing the origin with smooth boundary, $\lambda>0$, $0<s<1<p<N,$ $\theta \in[s, 1]$, $\mu \in (0, \bar{\mu}(\theta))$, and $p<r<p^{*}$. We prove the existence of a mountain pass type solution to problem \eqref{main problem 0.1}. The main difficulty in establishing this result lies in proving the convergence of the gradients, which is required to verify the Palais--Smale condition. To overcome this difficulty, we employ a technique introduced by Boccardo and Murat \cite{BM-1992}. The last main result of this paper is stated below.
\begin{thm}\label{main result thm 1}
  Let $\lambda>0$, $\mu \in (0,\bar{\mu}(\theta))$ and $p<r<p^{*}$. Then the problem \eqref{main problem 0.1}  admits a nontrivial mountain pass solution.
\end{thm}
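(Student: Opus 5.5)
We apply the classical mountain pass theorem of Ambrosetti--Rabinowitz to the energy functional
\[
\mathcal{J}(u)=\frac{1}{p}\Big(\int_\Omega|\nabla u|^p\,dx+\iint_{\mathbb{R}^{2N}}\frac{|u(x)-u(y)|^p}{|x-y|^{N+sp}}\,dx\,dy-\mu\int_\Omega\frac{|u|^p}{|x|^{p\theta}}\,dx\Big)-\frac{\lambda}{r}\int_\Omega|u|^r\,dx
\]
on $X(\Omega)$. It is $C^1$, and its critical points are the weak solutions of \eqref{main problem 0.1}.

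\textbf{Geometry.} By the interpolated Hardy inequality (the definition of $\bar\mu(\theta)$), for $\mu<\bar\mu(\theta)$ we have
\[
\mathcal{I}_\mu(u)\ge \frac{1}{p}\Big(1-\frac{\mu}{\bar\mu(\theta)}\Big)\|u\|_{X(\Omega)}^p .
\]
Combined with the embedding $X(\Omega)\hookrightarrow L^r(\Omega)$, this gives
\[
\mathcal{J}(u)\ge c_1\|u\|^p-c_2\lambda\|u\|^r .
\]
Since $r>p$, there are $\rho,\alpha>0$ with $\mathcal{J}\ge\alpha$ on $\|u\|=\rho$. For a fixed $\varphi\neq0$, $\mathcal{J}(t\varphi)\to-\infty$ as $t\to\infty$, again because $r>p$. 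Clearly $\mathcal{J}(0)=0$.

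\textbf{Palais--Smale condition.} Let $(u_k)$ be a (PS)$_c$ sequence. The Ambrosetti--Rabinowitz trick gives
\[
\mathcal{J}(u_k)-\frac{1}{r}\langle\mathcal{J}'(u_k),u_k\rangle=\Big(\frac1p-\frac1r\Big)p\,\mathcal{I}_\mu(u_k)\ge c\|u_k\|^p ,
\]
while the left side is at most $c+1+o(1)\|u_k\|$. Hence $(u_k)$ is bounded. Up to a subsequence, $u_k\rightharpoonup u$ in $X(\Omega)$, $u_k\to u$ in $L^r(\Omega)$ (subcritical and compact) and a.e. By Theorem \ref{CCP thm} we have measures $\omega,\sigma$ with atoms $\omega_0,\sigma_0$ at the origin.

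The main obstacle is passing to the limit in the quasilinear terms: $|\nabla u_k|^{p-2}\nabla u_k$, the nonlocal term, and the Hardy term $|u_k|^{p-2}u_k/|x|^{p\theta}$, which is not compact. First we would show $\nabla u_k\to\nabla u$ a.e. via Boccardo--Murat. Test $\mathcal{J}'(u_k)$ with $T_\varepsilon(u_k-u)\phi$, where $T_\varepsilon$ is truncation and $\phi$ is a cutoff vanishing near $0$ (away from the possible concentration point). Away from the origin the Hardy weight is bounded, so that term is compact. The nonlocal term is handled by monotonicity of $t\mapsto|t|^{p-2}t$ together with the cutoff, using that its commutator part is controlled by $\varepsilon$. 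This yields
\[
\int(|\nabla u_k|^{p-2}\nabla u_k-|\nabla u|^{p-2}\nabla u)\cdot\nabla(u_k-u)\to0
\]
on $\{|u_k-u|<\varepsilon\}$ locally, hence a.e. convergence of gradients. Then $|\nabla u_k|^{p-2}\nabla u_k\rightharpoonup|\nabla u|^{p-2}\nabla u$ in $L^{p'}$. Similarly the fractional quotients converge a.e. and are bounded in $L^{p'}(\mathbb{R}^{2N})$, and the Hardy term converges weakly in the weighted $L^{p'}$ space. So $\mathcal{J}'(u)=0$.

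For strong convergence we use the Brezis--Lieb lemma on each term, with $v_k=u_k-u$:
\[
o(1)=\langle\mathcal{J}'(u_k),u_k\rangle-\langle\mathcal{J}'(u),u\rangle=\|v_k\|^p-\mu\int\frac{|v_k|^p}{|x|^{p\theta}}\,dx+o(1),
\]
since the $L^r$ term of $v_k$ vanishes. The Hardy inequality then gives $(1-\mu/\bar\mu(\theta))\|v_k\|^p\to0$, so $u_k\to u$. The concentration bound $\bar\mu(\theta)\sigma_0\le\omega_0$ is exactly what excludes a mass loss at $0$ and is consistent with this step. Thus (PS) holds at every level.

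\textbf{Conclusion.} The mountain pass theorem gives a critical point $u$ with $\mathcal{J}(u)\ge\alpha>0=\mathcal{J}(0)$, so $u$ is a nontrivial weak solution.
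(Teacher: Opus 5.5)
Your proposal is correct and follows essentially the same route as the paper: mountain pass geometry from the interpolated Hardy inequality, boundedness of (PS) sequences via the Ambrosetti--Rabinowitz combination, a.e.\ convergence of gradients by the Boccardo--Murat truncation argument, and strong convergence from the Brezis--Lieb splitting together with the bound $(1-\mu/\bar\mu(\theta))\|u_k-u\|_X^p\le o(1)$. The only minor difference is that you insert a cutoff vanishing near the origin in the truncation step so that the Hardy term becomes compact, whereas the paper uses the global truncation and bounds the Hardy term by a constant times the truncation level via $\int_\Omega |x|^{-p\theta}\,dx<\infty$; your appeal to Theorem \ref{CCP thm} plays no role in the argument and can be dropped.
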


The paper is organized as follows. In Section \ref{sec2}, we state the functional setting and preliminary results used throughout the paper. Section \ref{sec3} contains the proof of Theorem \ref{CCP thm}. In Sections \ref{sec4} and \ref{sec5}, we prove the main existence results stated in Theorems \ref{main result thm} and \ref{main result thm 1}, respectively.

\section{Preliminaries} \label{sec2}
We begin this section by introducing some notation and function spaces used throughout the paper, and by stating some basic results that will be needed later.

Let $\Omega \subset \mathbb{R}^{N}$ (with $N \geq 3$) be a bounded domain containing the origin and with smooth boundary $\partial \Omega$. Let $p>1$ and $s \in(0,1)$ be real numbers such that $0<s<1<p<N$. For any  measurable function $u: \mathbb{R}^N \rightarrow \mathbb{R},$ consider the Gagliardo seminorm of $u$ defined by
\begin{equation*}
   [u]_{s, p}=\left(\iint_{\mathbb{R}^{2 N}} \frac{|u(x)-u(y)|^{p}}{|x-y|^{N+p s}} d x d y\right)^{1 / p}.
\end{equation*}

We define the function space $X(\Omega)$ as the completion of $C_0^{\infty}(\Omega)$ with respect to the following norm 
\begin{equation}\label{global norm}
    \|u\|_{X}=\left(\|\nabla u\|_{p}^{p}+[u]_{s, p}^{p}\right)^{1 / p}.
\end{equation}

It is well-known that $X(\Omega)$ is a uniformly convex Banach space. Recall that by continuous embedding of $W^{1,p}\left(\mathbb{R}^n\right)$ into $W^{s,p}\left(\mathbb{R}^n\right)$ (see, e.g., \cite[Proposition 2.2]{NPV-2012}), there exists a positive constant $C>0$ such that
\begin{equation}\label{Gagl ineq mixed}
[u]_{s,p}^p \leq C\|u\|_{W^{1,p}\left(\mathbb{R}^n\right)}^p=C\left(\|u\|_{p}^p+\|\nabla u\|_{p}^p\right) \quad \text { for all } u \in C_{0}^{\infty}(\Omega) .
\end{equation}
Then, applying \eqref{Gagl ineq mixed} and the classical Poincaré inequality, we conclude that there exist constants $c_1,c_2>0$ such that
\begin{equation*}
c_1\|u\|_{W^{1,p}\left(\mathbb{R}^N\right)} \leq\|u\|_{X} \leq c_2\|u\|_{W^{1,p}\left(\mathbb{R}^N\right)} \quad \text { for all } u \in C_{0}^{\infty}(\Omega) .
\end{equation*}
This shows that the norm $\|\cdot\|_{W^{1, p}(\mathbb{R}^N)}$ is equivalent to $\|\cdot\|_{X}$ on $C_{0}^{\infty}(\Omega)$, and therefore, we have the following characterization of $X(\Omega)$
\begin{align*}
X(\Omega) & =\overline{C_0^{\infty}(\Omega)}^{\|\cdot\|_{W^{1,p}\left(\mathbb{R}^N\right)}}=\\
&=\left\{u \in W^{1,p}\left(\mathbb{R}^N\right):\left.u\right|_{\Omega} \in W^{1,p}_0(\Omega) \text { and } u \equiv 0 \text { a.e. in } \mathbb{R}^N \backslash \Omega\right\} .
\end{align*}

Using the classical Sobolev inequality, we obtain the following Sobolev inequality for the norm defined in \eqref{global norm}:
\begin{equation}\label{mixed sobol ineq}
\|u\|_{L^{p^{*}}(\Omega)}=\|u\|_{L^{p^{*}}\left(\mathbb{R}^{N}\right)} \leq S\|\nabla u\|_{L^{p}\left(\mathbb{R}^{N}\right)} \leq S\|u\|_{X} \quad \text { for all } u \in X(\Omega), 
\end{equation}
where $S>0$ is the best Sobolev constant. Moreover, since $\Omega$ is bounded, we can apply H\"older's inequality in \eqref{mixed sobol ineq} to obtain 
\begin{equation}\label{Sobolev emb}
\|u\|_{L^{r}(\Omega)} \leqslant C_{r}\|u\|_{X}
\end{equation}
for all $r\in[1,p^{*}]$, $u \in X(\Omega)$, and a positive constant $C_{r}>0$. Hence, the embedding
\begin{equation*}
X(\Omega) \hookrightarrow L^{r}\left(\Omega\right)
\end{equation*}
is continuous for $1 \leq r\leq p^{*}$. 
In particular, from \cite[Theorem 2.80]{Demdem-2012}, we conclude that the embedding
\begin{equation}\label{Rel-Kon comp emb}
X(\Omega) \hookrightarrow L^{r}(\Omega) 
\end{equation}
is compact for $1 \leq r<p^{*}$.

Recall the classical Hardy  inequality given by
\begin{equation}\label{local Hardy}
C_{H} \int_{\Omega} \frac{|u|^{p}}{|x|^{p}} d x \leq \int_{\Omega}|\nabla u|^{p} d x 
\end{equation}
for all $1<p<N$ and $u \in W_{0}^{1, p}(\Omega)$ where the constant $C_{H}=\left(\frac{N-p}{p}\right)^{p}$ 
is optimal and not achieved \cite{AP-1998, HLP}. For a comprehensive advancement of Hardy inequalities, we refer to \cite{OK, RuzSur}. A nonlocal version of the Hardy inequality, obtained in \cite{FS-2008}, is stated as follows
\begin{equation}\label{fractional Hardy}
C_{N, p, s} \int_{\mathbb{R}^{N}} \frac{|u|^{p}}{|x|^{p s}} d x \leq \iint_{\mathbb{R}^{2N}}  \frac{|u(x)-u(y)|^{p}}{|x-y|^{N+s p}} d y d x \text { for all } u \in W^{s,p}\left(\mathbb{R}^{N}\right),
\end{equation}
where $N \geq 1$, $s \in(0,1),$ $1<p<N / s$. Here, the optimal constant is defined as
\begin{equation*}
    C_{N, p, s}:=2 \int_{0}^{1} t^{p s-1}\left|1-t^{(N-p s) / p}\right|^{p} \Phi_{N, s, p}(t) d t,
\end{equation*}
where
\begin{equation*}
    \Phi_{N, s, p}(t):= \begin{cases}\left|\mathbb{S}^{N-2}\right| \int_{-1}^{1} \frac{\left(1-r^{2}\right)^{(N-3) / 2} d r}{\left(1-2 r t+t^{2}\right)^{(N+p s) / 2}}, & N \geq 2, \\ \left(\frac{1}{(1-t)^{1+p s}}+\frac{1}{(1+t)^{1+p s}}\right), & N=1.\end{cases}
\end{equation*}

The following mixed interpolated Hardy inequality was established in \cite{MGS2025}.

\begin{lem}
    (Mixed Interpolated Hardy Inequality). Let $\theta \in[s, 1]$ with $s \in(0,1)$ and $1<$ $p<N$. Then for all $u \in X(\Omega)$, we have the following inequality
\begin{equation*}
\int_{\Omega} \frac{|u|^{p}}{|x|^{p \theta}} d x \leq \frac{(\theta-s)}{(1-s) C_{H}} \int_{\Omega}|\nabla u|^{p} d x+\frac{(1-\theta)}{(1-s) C_{N, p, s}}  \iint_{\mathbb{R}^{2N}} \frac{|u(x)-u(y)|^{p}}{|x-y|^{N+s p}} d y d x.
\end{equation*}
\end{lem}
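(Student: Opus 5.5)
The plan is to interpolate the weight $|x|^{-p\theta}$ between the two endpoint weights $|x|^{-p}$ and $|x|^{-ps}$, and then apply the classical Hardy inequality \eqref{local Hardy} and the fractional Hardy inequality \eqref{fractional Hardy} to the two resulting terms. By density of $C_0^\infty(\Omega)$ in $X(\Omega)$ and Fatou's lemma on the left-hand side, it suffices to treat $u\in C_0^\infty(\Omega)$.

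The endpoint cases are immediate. If $\theta=1$, the coefficient of the nonlocal term vanishes and the claim is exactly \eqref{local Hardy}. If $\theta=s$, the local coefficient vanishes. Since $u\equiv 0$ outside $\Omega$ and $u\in W^{s,p}(\mathbb{R}^N)$, the claim is exactly \eqref{fractional Hardy}.

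For $\theta\in(s,1)$, write $\theta=t\cdot 1+(1-t)s$ with
\[
t=\frac{\theta-s}{1-s}\in(0,1), \qquad 1-t=\frac{1-\theta}{1-s}.
\]
Then $|x|^{-p\theta}=(|x|^{-p})^{t}(|x|^{-ps})^{1-t}$. Young's inequality $a^tb^{1-t}\le ta+(1-t)b$, applied pointwise with $a=|u|^p|x|^{-p}$ and $b=|u|^p|x|^{-ps}$, gives
\[
\int_\Omega\frac{|u|^p}{|x|^{p\theta}}\,dx\le t\int_\Omega\frac{|u|^p}{|x|^{p}}\,dx+(1-t)\int_\Omega\frac{|u|^p}{|x|^{ps}}\,dx .
\]
Hölder's inequality with exponents $1/t$ and $1/(1-t)$ would work equally well here. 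Bounding the first integral by $C_H^{-1}\|\nabla u\|_p^p$ via \eqref{local Hardy}, and the second by $C_{N,p,s}^{-1}[u]_{s,p}^p$ via \eqref{fractional Hardy}, yields exactly the stated coefficients.

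There is no real obstacle in this argument. The only points needing care are the following:
\begin{enumerate}
\item the hypotheses of \eqref{fractional Hardy} hold, since $p<N<N/s$ and $X(\Omega)\subset W^{s,p}(\mathbb{R}^N)$;
\item both Hardy integrals are finite for $u\in X(\Omega)$, so that the splitting is legitimate.
\end{enumerate}
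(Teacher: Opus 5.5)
Your proof is correct. With $t=\frac{\theta-s}{1-s}$ one has $pt+ps(1-t)=p\theta$, so the pointwise weighted AM--GM bound splits the weight. Then \eqref{local Hardy} applies because $u|_\Omega\in W^{1,p}_0(\Omega)$, and \eqref{fractional Hardy} applies because $ps<N$ and $u\in W^{s,p}(\mathbb{R}^N)$ vanishes outside $\Omega$. Together they give exactly the coefficients $\frac{\theta-s}{(1-s)C_H}$ and $\frac{1-\theta}{(1-s)C_{N,p,s}}$.

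The paper does not prove this lemma; it quotes it from \cite{MGS2025}, so there is no in-paper argument to compare against. The variant you mention, applying H\"older with exponents $1/t$ and $1/(1-t)$ before the two Hardy inequalities, gives a slightly stronger multiplicative estimate:
\[
\int_\Omega\frac{|u|^p}{|x|^{p\theta}}\,dx\le\left(\frac{1}{C_H}\int_\Omega|\nabla u|^p\,dx\right)^{t}\left(\frac{1}{C_{N,p,s}}[u]_{s,p}^p\right)^{1-t}.
\]
The stated additive form then follows by Young's inequality. Your pointwise version reaches the additive form in one step, and that form is all the paper ever uses, through \eqref{mixed Hardy use}.

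Your two closing caveats are not needed. The integrands are nonnegative, so splitting the integral requires no finiteness. Both Hardy inequalities are already stated for all of $W^{1,p}_0(\Omega)$ and $W^{s,p}(\mathbb{R}^N)$ respectively, so the density and Fatou reduction can be dropped.
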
 

\begin{remark}
    Taking into account the definition of $\bar{\mu}(\theta)$ in \eqref{best Hardy constant}, we can write the mixed interpolated Hardy inequality as
\begin{equation}\label{mixed Hardy use}
\int_{\Omega} \frac{|u|^{p}}{|x|^{p \theta}} d x \leq \frac{1}{\bar{\mu}(\theta)} \left(\int_{\Omega}|\nabla u|^{p} d x+\iint_{\mathbb{R}^{2N}} \frac{|u(x)-u(y)|^{p}}{|x-y|^{N+s p}} d x d y\right)
\end{equation}
for all $u \in X(\Omega)$.
\end{remark}

We conclude this section by recalling the following results, which will be used to establish the existence results of this paper. 

\begin{thm}\label{abstr CPT}\cite[Theorem 2.5.]{Ricceri-2000}
Let $X$ be a reflexive real Banach space, and let $\Phi, \Psi: X \rightarrow \mathbb{R}$ be two G\^ateaux differentiable functionals on $X$.
Assume that the functional $\Psi$ is sequentially weakly upper semicontinuous and that the functional $\Phi$ is strongly continuous, sequentially weakly lower semicontinuous, and coercive. 

Define
\begin{equation*}
    \varphi(\rho):=\inf _{u \in \Phi^{-1}(]-\infty, \rho[)} \frac{\left(\sup _{v \in \Phi^{-1}(]-\infty, \rho[)} \Psi(v)\right)-\Psi(u)}{\rho-\Phi(u)} 
\end{equation*}
for every $\rho>\inf _{X} \Phi$.

Then, for every $\rho>\inf _{X} \Phi$ and $\lambda \in( 0,1 / \varphi(\rho))$, the restriction of the functional $J_{\lambda}:=\Phi-\lambda \Psi$ to $\Phi^{-1}(-\infty, \rho)$ admits a global minimum, which is a critical point (local minimum) of $J_{\lambda}$ in $X$.
\end{thm}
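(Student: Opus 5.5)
The plan is to produce the minimizer directly and then use the choice of $\lambda$ to show that it does not lie on the boundary $\{\Phi=\rho\}$. Fix $\rho>\inf_X\Phi$ and $\lambda\in(0,1/\varphi(\rho))$, and write $U_\rho:=\Phi^{-1}(]-\infty,\rho[)$ and $M_\rho:=\sup_{U_\rho}\Psi$. Since $\varphi(\rho)<1/\lambda$, the definition of $\varphi$ as an infimum gives some $u_0\in U_\rho$ with
\begin{equation*}
\frac{M_\rho-\Psi(u_0)}{\rho-\Phi(u_0)}<\frac1\lambda, \qquad\text{that is,}\qquad J_\lambda(u_0)=\Phi(u_0)-\lambda\Psi(u_0)<\rho-\lambda M_\rho .
\end{equation*}
In particular $M_\rho<+\infty$. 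This strict inequality is the only place where the hypothesis on $\lambda$ enters, and it drives the rest of the argument.

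Next I would set up a compact set to minimize on. Let $K$ be the set of weak limits of sequences in $U_\rho$. Coercivity of $\Phi$ makes $U_\rho$ bounded. Reflexivity together with Eberlein--\v{S}mulian shows that $K$ is bounded, and that every sequence in $K$ has a subsequence converging weakly to a point of $K$; to see this, approximate each term of the sequence by an element of $U_\rho$, using the metrizability of the weak topology on bounded subsets of the separable closed span and a diagonal argument. Sequential weak lower semicontinuity of $\Phi$ gives $\Phi\le\rho$ on $K$. Sequential weak upper semicontinuity of $\Psi$ gives $\Psi\le M_\rho$ on $K$. Consequently $J_\lambda=\Phi-\lambda\Psi$ is sequentially weakly lower semicontinuous, and it is bounded below on $K$ by $\inf_X\Phi-\lambda M_\rho$. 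A minimizing sequence in $K$ then has a weakly convergent subsequence with limit $u^*\in K$ satisfying $J_\lambda(u^*)=\min_K J_\lambda\le J_\lambda(u_0)$.

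It remains to exclude $\Phi(u^*)=\rho$. If this held, the bound $\Psi(u^*)\le M_\rho$ would give $J_\lambda(u^*)\ge \rho-\lambda M_\rho>J_\lambda(u_0)$, which is a contradiction. Hence $u^*\in U_\rho\subset K$, so $u^*$ is a global minimum of $J_\lambda$ restricted to $U_\rho$. Strong continuity of $\Phi$ implies norm continuity, so $U_\rho$ is norm-open. Therefore $u^*$ is a local minimum of $J_\lambda$ on $X$, and Gâteaux differentiability gives $J_\lambda'(u^*)=0$.

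The step I expect to be the main obstacle is the passage to the sequential weak closure $K$. Minimizing over the larger sublevel set $\{\Phi\le\rho\}$ would be simpler, but there one only has $\Psi\le\sup_{\{\Phi\le\rho\}}\Psi$, and this supremum may exceed $M_\rho$. The fix is to work on $K$, where upper semicontinuity of $\Psi$ transfers the bound $M_\rho$. The delicate point is checking that $K$ is itself sequentially weakly closed, which needs the diagonal argument on bounded sets of the reflexive space. If that step proves awkward, I would fall back on the original argument in \cite{Ricceri-2000}.
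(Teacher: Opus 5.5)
Your argument breaks at the claim that ``sequential weak upper semicontinuity of $\Psi$ gives $\Psi\le M_\rho$ on $K$.'' Upper semicontinuity says $\limsup_j\Psi(u_j)\le\Psi(u)$ when $u_j\rightharpoonup u$, so it bounds $\Psi(u)$ from \emph{below}, not above. $\Psi$ may therefore jump up at points of $K\setminus U_\rho$, and G\^ateaux differentiability does not prevent this. You use exactly this bound to rule out $\Phi(u^*)=\rho$, and the conclusion $u^*\in U_\rho$ can actually fail.

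Here is an example in $X=\mathbb{R}^2$. Let $g(x,y)=4x^8y^2/(x^8+y^2)^2$ with $g(0)=0$. It satisfies $0\le g\le 1$ and is G\^ateaux differentiable with $g'(0)=0$. It is discontinuous at the origin, since $g=1$ on $y=x^4$. Take $\Psi=-g$ and $\Phi(z)=\rho+|z|^2\bigl(1-2\chi(|z|)\beta(g(z))\bigr)$. Here $\chi$ is a smooth cutoff with $\chi=1$ on $[0,\frac12]$ and $\chi=0$ on $[1,\infty)$. The function $\beta$ is smooth and nondecreasing, with $\beta=0$ on $(-\infty,0.9]$ and $\beta(1)=1$. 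Then:
\begin{itemize}
\item All hypotheses of the theorem hold.
\item $U_\rho\subset\{g>0.9\}$, and $U_\rho$ accumulates at the origin.
\item $\Psi(0)=0>M_\rho$.
\item $\lambda=2$ is admissible: testing $\varphi(\rho)$ at $(0.49,0.49^4)$ gives $\varphi(\rho)<\frac12$.
\item $J_2\ge\rho-|z|^2+1.8>\rho=J_2(0)$ on $U_\rho$.
\end{itemize}
So $\min_K J_2$ is attained only at the origin, which lies outside $U_\rho$.

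This is why Ricceri's original formulation takes the supremum in $\varphi$ over the weak closure of $\Phi^{-1}(]-\infty,\rho[)$. With $\sup_K\Psi$ in place of $M_\rho$ your argument on $K$ is correct. However, that is a weaker statement than the one quoted here, since it gives a smaller admissible range of $\lambda$. The paper itself gives no proof, only the citation.

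The quoted version is still true. The fix is never to evaluate $\Psi$ at the limit point, only along the sequence.
\begin{itemize}
\item Minimize over $U_\rho$ itself, where $J_\lambda\ge\inf_X\Phi-\lambda M_\rho>-\infty$.
\item Take $(u_n)\subset U_\rho$ with $J_\lambda(u_n)\to\inf_{U_\rho}J_\lambda$. The sequence is bounded by coercivity, so $u_n\rightharpoonup\bar u$ along a subsequence, and $\Phi(\bar u)\le\rho$.
\item Suppose $\Phi(\bar u)=\rho$. Weak lower semicontinuity forces $\Phi(u_n)\to\rho$. Since $\Psi(u_n)\le M_\rho$ for every $n$, you get $\inf_{U_\rho}J_\lambda=\lim_n J_\lambda(u_n)\ge\rho-\lambda M_\rho>J_\lambda(u_0)$, a contradiction.
\item Hence $\bar u\in U_\rho$, and sequential weak lower semicontinuity of $J_\lambda$ gives $J_\lambda(\bar u)=\min_{U_\rho}J_\lambda$.
\end{itemize}
Your final step (norm-openness of $U_\rho$ plus G\^ateaux differentiability) then concludes. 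The set $K$ and the diagonal argument become unnecessary.
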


We also recall the following classical mountain pass lemma \cite[Theorem 6.1]{Struwe}. 

\begin{thm}\label{MP thm} 
Let $(X,\|\cdot\|)$ be a Banach space and let $F : X\to \mathbb{R}$ be a $C^{1}$ functional. Suppose that $F$ satisfies the Palais-Smale condition and the following assumptions hold:

(i) $F(0)=0$;

(ii) There exist $\rho,$ $\alpha>0$ such that $F(u) \geq \alpha$ for all $u\in X$ with $\|u\|=\rho$;

(iii) There exists a function $v \in X$ such that $\left\|v\right\| \geq \rho$ and $F\left(v\right)<\alpha$.

Define the set of paths joining $u = 0$ with $u = v$:
\begin{equation*}
    \Gamma:=\left\{\gamma \in C([0,1] ; X): \gamma(0)=0, \gamma(1)=v\right\}.
\end{equation*}
Then,
\begin{equation*}
    c:=\inf _{\gamma \in \Gamma} \sup _{u \in \gamma} F(u) \geq \alpha
\end{equation*}
is a critical value of $F$.
\end{thm}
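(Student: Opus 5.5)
The plan is the classical deformation argument: show that $c$ is a finite number with $c\geq\alpha$, and then show that if $c$ were not a critical value, one could push every path in $\Gamma$ strictly below level $c$. That would contradict the definition of $c$.

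\textbf{Step 1: $c\geq\alpha$ and $\Gamma\neq\emptyset$.} The segment $t\mapsto tv$ belongs to $\Gamma$, so $c<\infty$.

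First, $\|v\|>\rho$. Indeed, if $\|v\|=\rho$, hypothesis (ii) would give $F(v)\geq\alpha$, contradicting (iii).

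Now fix any $\gamma\in\Gamma$. The map $t\mapsto\|\gamma(t)\|$ is continuous on $[0,1]$. It takes the value $0$ at $t=0$ and a value larger than $\rho$ at $t=1$. By the intermediate value theorem there is $t_0$ with $\|\gamma(t_0)\|=\rho$, and then (ii) gives
\[
\sup_{u\in\gamma}F(u)\ \geq\ F(\gamma(t_0))\ \geq\ \alpha .
\]
Taking the infimum over $\gamma\in\Gamma$ yields $c\geq\alpha>0$.

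\textbf{Step 2: a uniform lower bound on $\|F'\|$ near level $c$.} Suppose, for contradiction, that $c$ is not a critical value. We claim there exist $\bar\varepsilon>0$ and $\delta>0$ such that $\|F'(u)\|_{X^*}\geq\delta$ whenever $|F(u)-c|\leq\bar\varepsilon$.

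If not, there is a sequence $(u_n)$ with $F(u_n)\to c$ and $F'(u_n)\to0$. By the Palais--Smale condition a subsequence converges to some $u$. Since $F$ is $C^1$, we get $F(u)=c$ and $F'(u)=0$, so $c$ would be a critical value, a contradiction.

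We also shrink $\bar\varepsilon$ so that
\[
\bar\varepsilon<c-\max\{F(0),F(v)\}.
\]
This is possible because $F(0)=0<\alpha\leq c$ and $F(v)<\alpha\leq c$.

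\textbf{Step 3: the deformation lemma.} This is the main obstacle. For every $\varepsilon\in(0,\bar\varepsilon)$ small enough, we want a continuous map $\eta:X\to X$ with two properties:
\begin{enumerate}[(a)]
\item $\eta(u)=u$ whenever $|F(u)-c|\geq\bar\varepsilon$;
\item $\eta\bigl(\{F\leq c+\varepsilon\}\bigr)\subset\{F\leq c-\varepsilon\}$.
\end{enumerate}

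Since $X$ is only a Banach space, $F'$ cannot be used directly as a flow field. I would first build a locally Lipschitz pseudo-gradient field $V$ on the set $\widetilde X=\{u: F'(u)\neq0\}$, satisfying
\[
\|V(u)\|\leq 2\|F'(u)\|, \qquad \langle F'(u),V(u)\rangle\geq\|F'(u)\|^2 .
\]
The construction goes as follows. Around each point $u$, choose a fixed vector that works as a pseudo-gradient on a neighbourhood of $u$. Then glue these vectors with a locally Lipschitz partition of unity, which exists because metric spaces are paracompact.

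Next, multiply $V$ by a Lipschitz cutoff $\chi$. We take $\chi=1$ on $\{|F-c|\leq\varepsilon\}$ and $\chi=0$ outside $\{|F-c|<\bar\varepsilon\}$. We also normalise the field by dividing by $\|V\|$, which is harmless since $\|F'\|\geq\delta$ on the support of $\chi$. Then solve the ODE
\[
\partial_t\eta=-\chi(\eta)\,\frac{V(\eta)}{\|V(\eta)\|^2}.
\]
The flow exists for all times, because the field is bounded by a multiple of $1/\delta$ and locally Lipschitz. Along the flow $F$ is nonincreasing. While the trajectory stays in the band $\{|F-c|\leq\varepsilon\}$, where $\chi=1$, the pseudo-gradient inequality and $\|V\|\leq2\|F'\|$ give
\[
\frac{d}{dt}F(\eta)\leq-\tfrac14 .
\]
Hence the time-$T$ map with $T=8\varepsilon$ lowers the level from $c+\varepsilon$ to $c-\varepsilon$.

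\textbf{Step 4: conclusion.} By the definition of $c$, pick $\gamma\in\Gamma$ with $\sup_{u\in\gamma}F(u)\leq c+\varepsilon$. By the choice of $\bar\varepsilon$ in Step 2, the endpoints satisfy $F(0)<c-\bar\varepsilon$ and $F(v)<c-\bar\varepsilon$, so property (a) says $\eta$ fixes both of them. Therefore $\eta\circ\gamma\in\Gamma$. By property (b), $\sup_{u\in\eta\circ\gamma}F(u)\leq c-\varepsilon<c$, which contradicts the definition of $c$. Hence $c$ is a critical value of $F$, and $c\geq\alpha$ by Step 1.
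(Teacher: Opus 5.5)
Your proof is correct. It is the standard Ambrosetti--Rabinowitz argument via a pseudo-gradient deformation lemma, which is how the result the paper quotes from Struwe (Theorem 6.1 there) is proved; the paper itself gives no proof. The only slip is verbal: you say you normalise by dividing by $\|V\|$, but your flow $\partial_t\eta=-\chi(\eta)V(\eta)/\|V(\eta)\|^2$ and the bound $\frac{d}{dt}F(\eta)\le-\frac14$ (hence $T=8\varepsilon$) use $\|V\|^2$, which is the consistent choice.
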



\section{Concentration-compactness principle for the mixed local-nonlocal operators} \label{sec3}

In this section, we establish the concentration-compactness principle for the mixed local-nonlocal operators ( that is, Theorem \ref{CCP thm}). The proof follows the arguments of \cite[Theorem 1.1]{F-P} and relies on the detailed analysis of the exact behavior of weakly convergent sequences of $X(\Omega)$ in the space of measures.  

\begin{proof}[Proof of Theorem \ref{CCP thm}]
    Let $\{u_{k}\}$ be a weakly convergent sequence in $X(\Omega)$ with weak limit $u\in X(\Omega)$. Then by the continuous embedding \cite{MGS2025ArX}, $X(\Omega) \hookrightarrow L^{p}\left(\Omega,|x|^{-p\theta }\right)$, the sequence $\{u_{k}\}$ converges weakly to $u$ also in $L^{p}\left(\Omega,|x|^{-p\theta}\right)$. Moreover, the measures
     \begin{equation*}
\left\{\left|\nabla u_{k}(x)\right|^{p} d x +\left(\int_{\mathbb{R}^N}\frac{|u_k(x)-u_k(y)|^p}{|x-y|^{N+ps}}dy\right)dx\right\} \quad \text { and } \quad \left\{\frac{\left|u_{k}(x)\right|^{p}}{|x|^{p\theta}} dx\right\}
\end{equation*}
are uniformly tight in $k$. Indeed, since $\Omega$ is bounded, one can choose a bounded open set $\mathcal{O} \subset \mathbb{R}^{N}$ satisfying $\overline{\Omega} \subset \mathcal{O}$. Then, we deduce that $u_{k}(x)=0$ for almost every $x \in \mathbb{R}^{N} \backslash \mathcal{O}$. Using this, we obtain 
\begin{align*}
\int_{\mathbb{R}^{N} \backslash \mathcal{O}}\left|\nabla u_{k}(x)\right|^{p} d x&+\int_{\mathbb{R}^{N} \backslash \mathcal{O}}\int_{\mathbb{R}^N}\frac{|u_k(x)-u_k(y)|^p}{|x-y|^{N+ps}}dy d x\\& =\int_{\mathbb{R}^{N} \backslash \mathcal{O}} \left(\int_{\mathbb{R}^{N}} \frac{\left|u_{k}(x)-u_{k}(y)\right|^{p}}{|x-y|^{N+p s}} d y\right) d x\\
& =\int_{\mathbb{R}^{N} \backslash \mathcal{O}} \left(\int_{\mathbb{R}^{N}} \frac{\left|u_{k}(y)\right|^{p}}{|x-y|^{N+p s}} d y\right)d x\\
& =\int_{\mathbb{R}^{N} \backslash \mathcal{O}} \left(\int_{\Omega} \frac{\left|u_{k}(y)\right|^{p}}{|x-y|^{N+p s}} d y\right) d x\\
& \leq \int_{\mathbb{R}^{N} \backslash \mathcal{O}} \frac{d x}{\operatorname{dist}(x, \overline{\Omega})^{N+p s}} \int_{\Omega}\left|u_{k}(y)\right|^{p} d y \\
& \leq\left\|u_{k}\right\|_{p}^{p}\left(\int_{\mathbb{R}^{N} \backslash \mathcal{O}} \frac{d x}{\operatorname{dist}(x, \overline{\Omega})^{N+p s}}\right) \\
& \leq \sup _{k}\left\|u_{k}\right\|_{p}^{p}\left(\int_{\mathbb{R}^{N} \backslash \mathcal{O}} \frac{d x}{\operatorname{dist}(x, \overline{\Omega})^{N+p s}}\right) \leq C
\end{align*}
for some constant $C>0$, since $\operatorname{dist}\left(\mathbb{R}^{N} \backslash \mathcal{O}, \overline{\Omega}\right)>0$ and $N+p s>N$.

Arguing as above, we also obtain the tightness of the measure $\{|u_{k}|^p /|x|^{p\theta }\}$. Therefore, utilizing  \cite[Theorem 8.6.2]{Bogachev}, we conclude that there exist two finite positive measures $\omega$ and $\sigma$ in $\mathbb{R}^{N}$ such that \eqref{ccp pr 1} holds.

Now we set $v_{k}=u_{k}-u$. From the weak convergence, we deduce that $v_{k} \rightharpoonup 0$ in $X(\Omega)$ as $k \rightarrow \infty$. Using the same argument as above, we derive the existence of two positive measures $\widehat{\omega}$ and $\widehat{\sigma}$ on $\mathbb{R}^{N}$ such that
\begin{align}\label{ccp pr 4}
\begin{split}
    \left|\nabla v_{k}(x)\right|^{p} d x +\left(\int_{\mathbb{R}^N}\frac{|v_k(x)-v_k(y)|^p}{|x-y|^{N+ps}}dy\right)dx \stackrel{*}{\rightharpoonup} \widehat{\omega} \\
     \text{ and } \quad \frac{\left|v_{k}(x)\right|^{p} }{|x|^{p\theta}}dx \stackrel{*}{\rightharpoonup} \widehat{\sigma} \quad \text { in } \mathcal{M}\left(\mathbb{R}^{N}\right).
\end{split}
\end{align}
Furthermore, by the compact embedding \eqref{Rel-Kon comp emb}, the sequence $\{u_{k}\}$ strongly converges to $u$ in $L^{p}(\Omega)$. By extending the functions trivially to the whole $\mathbb{R}^{N}$, this strong convergence also holds in $L^{p}\left(\mathbb{R}^{N}\right)$. Then \cite[ Theorem 4.9]{Brezis} implies that there exists $h \in L^{p}(\Omega)$ such that, up to a subsequence, still relabeled $\{u_{k}\}$, we have
\begin{equation}\label{ccp pr 4.1}
u_{k} \rightarrow u \text { a.e. in } \Omega, \quad\left|u_{k}\right| \leq h \text { a.e. in } \Omega \text { and all } k . 
\end{equation}
Thus, applying the Brezis-Lieb lemma (see \cite{B-L}), we obtain
\begin{align*}
\int_{\Omega}|\varphi(x)|^{p} d \sigma-\int_{\Omega}\frac{|\varphi u|^p}{|x|^{p\theta}}dx & =\lim _{k \rightarrow \infty}\int_{\Omega}\frac{|\varphi u_k|^p}{|x|^{p\theta}}dx-\int_{\Omega}\frac{|\varphi u|^p}{|x|^{p\theta}}dx \\
& =\lim _{k \rightarrow \infty}\int_{\Omega}\frac{|\varphi v_k|^p}{|x|^{p\theta}}dx=\int_{\Omega}|\varphi(x)|^{p} d \widehat{\sigma}
\end{align*}
for any $\varphi \in C_{0}^{\infty}(\Omega)$. Since $\varphi \in C_{0}^{\infty}(\Omega)$ is arbitrary, we conclude that
\begin{equation}\label{to pr 1.5}
    \sigma=\widehat{\sigma}+ \frac{|u(x)|^{p}}{|x|^{p\theta}}dx.
\end{equation}

Now we are in a position to prove \eqref{ccp pr 2}. For this, it is enough to show that $\widehat{\sigma}=\sigma_0\delta_0$ in \eqref{to pr 1.5}.

We fix $\varphi \in C_{0}^{\infty}(\Omega)$ and $\varepsilon>0$. Then there exists a positive constant $C_{\varepsilon}>0$ such that 
\begin{equation*}
    |a+b|^{p} \leq(1+\varepsilon)|a|^{p}+C_{\varepsilon}|b|^{p}
\end{equation*}
for all $a, b \in \mathbb{R}$. Thus, applying the Leibniz formula, putting $a=(v_k(x)-v_k(y))\varphi(x)$ and $b=v_k(y)(\varphi(x)-\varphi(y))$ in the above inequality, we obtain for all $k$ that
\begin{align}\label{ccp pr 5}
\begin{split}
\left[ v_{k}\varphi\right]_{s, p}^{p}&=\int_{\mathbb{R}^{N}}\left(\int_{\mathbb{R}^N}\frac{|(v_k\varphi)(x)-(v_k\varphi)(y)|^p}{|x-y|^{N+ps}}dy\right) d x \\
&\leq(1+\varepsilon) \int_{\mathbb{R}^{N}}\left(\int_{\mathbb{R}^N}\frac{|v_k(x)-v_k(y)|^p}{|x-y|^{N+ps}}dy\right)|\varphi(x)|^p d x\\
    &+C_{\varepsilon} \int_{\mathbb{R}^{N}}\left(\int_{\mathbb{R}^N}\frac{|\varphi(x)-\varphi(y)|^p}{|x-y|^{N+ps}}dy\right)|v_k(x)|^p d x.
\end{split}
\end{align}
Moreover, we have
\begin{equation}\label{ccp pr 6}
\int_{\mathbb{R}^{N}} \frac{|\varphi(x)-\varphi(y)|^{p}}{|x-y|^{N+p s}} d y \leq 2^{p}\|\varphi\|_{C^{1}\left(\mathbb{R}^{N}\right)}^{p} \int_{\mathbb{R}^{N}} \frac{\min \left\{1,|x-y|^{p}\right\}}{|x-y|^{N+p s}} d y \leq C_{\varphi,s},
\end{equation}
for some $C_{\varphi,s}>0$ depending on $N, p$ and $s$. 
Thus, combining \eqref{ccp pr 5} and \eqref{ccp pr 6}, we obtain 
\begin{equation}\label{ccp pr 7}
\left[ v_{k}\varphi\right]_{s, p}^{p} \leq(1+\varepsilon) \int_{\mathbb{R}^{N}}\left(\int_{\mathbb{R}^N}\frac{|v_k(x)-v_k(y)|^p}{|x-y|^{N+ps}}dy\right)|\varphi(x)|^{p} d x+C_{\varepsilon, \varphi, s}\left\|v_{k}\right\|_{p}^{p} 
\end{equation}
for some constant $C_{\varepsilon, \varphi,s}>0$.

Arguing as above, we also obtain 
\begin{equation}\label{ccp pr 8}
    \left\|\nabla\left(v_k \varphi\right)\right\|_p^p \leq(1+\varepsilon) \int_{\Omega}\left|\nabla v_k\right|^p|\varphi|^p dx+C_{\varepsilon, \varphi}\left\|v_k\right\|_p^p
\end{equation}
for some $C_{\varepsilon, \varphi}>0$.

Combining \eqref{ccp pr 7} and \eqref{ccp pr 8} gives
\begin{align*}
\begin{split}
    \left\|\nabla\left(v_k \varphi\right)\right\|_p^p+\left[ v_{k}\varphi\right]_{s, p}^{p} &\leq(1+\varepsilon) \Bigg(\int_{\Omega}\left|\nabla v_k(x)\right|^p|\varphi(x)|^p dx\\
&+\int_{\mathbb{R}^{N}}\left(\int_{\mathbb{R}^N}\frac{|v_k(x)-v_k(y)|^p}{|x-y|^{N+ps}}dy\right)|\varphi(x)|^{p} d x\Bigg)+2\mathrm{max}(C_{\varepsilon, \varphi},C_{\varepsilon, \varphi,s})\left\|v_k\right\|_p^p. 
\end{split}
\end{align*}
Therefore, applying the interpolated Hardy inequality \eqref{mixed Hardy use} to the sequence $\{v_{k}\varphi\}\subset X(\Omega)$, we get
\begin{align}\label{ccp pr 9}
\begin{split}
    \bar{\mu}(\theta)&\int_{\Omega}\frac{|v_{k}\varphi|^p}{|x|^{p\theta}}dx \leq\left\|\nabla\left(v_k \varphi\right)\right\|_p^p+\left[ v_{k}\varphi\right]_{s, p}^{p} \leq(1+\varepsilon) \Bigg(\int_{\Omega}\left|\nabla v_k(x)\right|^p|\varphi(x)|^p dx\\
&+\int_{\mathbb{R}^{N}}\left(\int_{\mathbb{R}^N}\frac{|v_k(x)-v_k(y)|^p}{|x-y|^{N+ps}}dy\right)|\varphi(x)|^{p} d x\Bigg)+2\mathrm{max}(C_{\varepsilon, \varphi},C_{\varepsilon, \varphi,s})\left\|v_k\right\|_p^p.
\end{split}
\end{align}
Finally, passing to the limit on both sides of the above inequality and using \eqref{ccp pr 4}, together with the fact that $v_{k}\rightarrow 0$ in $L^{p}(\Omega)$ as $k \rightarrow \infty$, we obtain 
\begin{equation*}
    \int_{\Omega}|\varphi(x)|^{p} d \widehat{\sigma} \leq \frac{1+\varepsilon}{\bar{\mu}(\theta)} \int_{\Omega}|\varphi(x)|^{p} d \widehat{\omega},
\end{equation*}
which implies that the measure $\widehat{\sigma}$ is absolutely continuous with respect to $\widehat{\omega}$. Therefore, it follows from \cite[Lemma 1.2]{Lions-1985} that the measure $\widehat{\sigma}$ decomposes into a sum of Dirac masses. 

Now we show that the measure $\widehat{\sigma}$ is concentrated at 0. For this, we assume that $0 \notin \operatorname{supp}(\varphi)$. Then we have $|\varphi(x)|^{p} /|x|^{p\theta}\in L^{\infty}(\operatorname{Supp}(\varphi))$. Using this and the compact embedding \eqref{Rel-Kon comp emb}, we obtain that
\begin{equation}\label{ccp pr 444}
    \int_{\Omega}\frac{|\varphi v_{k}|^p}{|x|^{p\theta}}dx=\int_{\operatorname{Supp}(\varphi)} \frac{|\varphi(x)|^{p}}{|x|^{p\theta}}\left|v_{k}(x)\right|^{p} d x \leq C \int_{\operatorname{Supp}(\varphi)}\left|v_{k}(x)\right|^{p} d x \rightarrow 0 \quad \text{ as }k \rightarrow \infty .
\end{equation}
Combining \eqref{ccp pr 444} and \eqref{ccp pr 4}, we deduce that $\int_{\Omega}|\varphi(x)|^{p} d \widehat{\sigma}=0$, which means that $\widehat{\sigma}$ is a measure concentrated at $0$. Hence, we have $\widehat{\sigma}=\sigma_{0} \delta_{0}$ completing the proof of \eqref{ccp pr 2}.

Next, we prove \eqref{ccp pr 3}. Arguing as in \eqref{ccp pr 9}, replacing $v_{k}$ by $u_{k}$, and using \eqref{ccp pr 4} and \eqref{ccp pr 4.1}, we get
\begin{align}\label{ccp pr 13}
\begin{split}
\bar{\mu}(\theta)&\left(\int_{\Omega}|\varphi(x)|^{p} d \sigma\right) \leq(1+\varepsilon) \int_{\Omega}|\varphi(x)|^{p} d \omega\\
&+C_{\varepsilon} \left(\int_{\Omega}\left|\nabla \varphi(x)\right|^{p}|u(x)|^{p} d x+\int_{\mathbb{R}^{N}}\left(\int_{\mathbb{R}^N}\frac{|\varphi(x)-\varphi(y)|^p}{|x-y|^{N+ps}}dy\right)|u(x)|^{p} d x\right) 
\end{split}
\end{align}
as $k \rightarrow \infty$.

Let us now consider the test function $\varphi_{\tilde{\varepsilon}}(x)=\varphi(x / \tilde{\varepsilon})$  for $\tilde{\varepsilon}>0$ sufficiently small, where $\varphi \in C_{0}^{\infty}\left(\mathbb{R}^{N}\right)$, with $0 \leq \varphi \leq 1$, $\varphi(0)=1,$ and  $\operatorname{supp}(\varphi)=B(0,1)$. Since $\sigma \geq \sigma_{0} \delta_{0}$, choosing $\varphi_{\tilde{\varepsilon}}$ as a test function in \eqref{ccp pr 13}, we obtain
\begin{align}\label{ccp pr 14}
\begin{split}
    0 &\leq \bar{\mu}(\theta) \sigma_{0} \leq(1+\varepsilon) \omega(B(0, \tilde{\varepsilon}))\\
&+C_{\varepsilon} \left(\int_{\Omega}\left|\nabla \varphi_{\tilde{\varepsilon}}(x)\right|^{p}|u(x)|^{p} d x+\int_{\mathbb{R}^{N}}\left(\int_{\mathbb{R}^N}\frac{|\varphi_{\tilde{\varepsilon}}(x)-\varphi_{\tilde{\varepsilon}}(y)|^p}{|x-y|^{N+ps}}dy\right)|u(x)|^{p} d x\right).
\end{split}
\end{align}
As already proved in \cite[page 436]{F-P}, we have 
\begin{equation*}
    \lim _{\tilde{\varepsilon} \rightarrow 0} \int_{\mathbb{R}^{N}}\left(\int_{\mathbb{R}^N}\frac{|\varphi_{\tilde{\varepsilon}}(x)-\varphi_{\tilde{\varepsilon}}(y)|^p}{|x-y|^{N+ps}}dy\right)|u(x)|^{p} d x=0.
\end{equation*}
Moreover, applying H\"older's inequality yields
\begin{equation*}
    \lim _{\tilde{\varepsilon} \rightarrow 0} \int_{\Omega}\left|\nabla \varphi_{\tilde{\varepsilon}}(x)\right|^{p}|u(x)|^{p} d x=0.
\end{equation*}
Hence, letting $\tilde{\varepsilon} \rightarrow 0$ and $\varepsilon \rightarrow 0$ in \eqref{ccp pr 14}, we obtain $0 \leq \bar{\mu}(\theta) \sigma_{0} \leq \omega_{0}$. Now, using the weak lower semicontinuity of the norm given in \eqref{global norm}, we derive 
\begin{align*}
\liminf_{k\to\infty}&\left(\int_{\Omega}\left|\nabla u_k(x)\right|^{p} d x +\iint_{\mathbb{R}^{2N}}\frac{|u_k(x)-u_k(y)|^p}{|x-y|^{N+ps}}dxdy\right)\\
    &\geq\int_{\Omega}\left|\nabla u(x)\right|^{p} d x +\iint_{\mathbb{R}^{2N}}\frac{|u(x)-u(y)|^p}{|x-y|^{N+ps}}dxdy.
\end{align*}  
Combining this with \eqref{ccp pr 1}, gives
\begin{align} \label{eqshe}
    \omega \geq\left|\nabla u(x)\right|^{p} d x +\left(\int_{\mathbb{R}^N}\frac{|u(x)-u(y)|^p}{|x-y|^{N+ps}}dy\right)dx.
\end{align}
Using \eqref{eqshe} and the orthogonality of the measures $\left|\nabla u(x)\right|^{p} d x +\left(\int_{\mathbb{R}^N}\frac{|u(x)-u(y)|^p}{|x-y|^{N+ps}}dy\right)dx$ and $\omega_{0} \delta_{0}$, we conclude that \eqref{ccp pr 3} holds.
\end{proof}

We now define the functional $\mathcal{I}_{\mu}: X(\Omega) \rightarrow \mathbb{R}$ as follows
\begin{equation}\label{Hardy functional}
    \mathcal{I}_{\mu}(u):=\frac{1}{p}\left(\int_{\Omega}\left|\nabla u\right|^p dx+\iint_{\mathbb{R}^{2N}} \frac{|u(x)-u(y)|^{p}}{|x-y|^{N+sp}} dxdy-\mu \int_{\Omega} \frac{\left|u\right|^p}{|x|^{p\theta}} dx\right).
\end{equation}
Using Theorem \ref{CCP thm}, we establish that $\mathcal{I}_{\mu}$ is a coercive and weakly lower semicontinuous functional on $X(\Omega)$.

\begin{lem}\label{wls lem}
If $\mu<\bar{\mu}(\theta)$, then the functional $\mathcal{I}_{\mu}$ is coercive and weakly lower semicontinuous on $X(\Omega)$.
\end{lem}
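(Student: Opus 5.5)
Coercivity follows directly from the mixed interpolated Hardy inequality \eqref{mixed Hardy use}. For every $u\in X(\Omega)$ it gives $\mu\int_\Omega |u|^p|x|^{-p\theta}\,dx\le \frac{\mu}{\bar\mu(\theta)}\|u\|_X^p$ when $\mu\ge 0$, so that
\begin{equation*}
\mathcal{I}_\mu(u)\ge \frac{1}{p}\Big(1-\frac{\max\{\mu,0\}}{\bar\mu(\theta)}\Big)\|u\|_X^p .
\end{equation*}
The constant is positive because $\mu<\bar\mu(\theta)$. Hence $\mathcal{I}_\mu(u)\to\infty$ as $\|u\|_X\to\infty$. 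If $\mu\le 0$ the Hardy term is nonnegative after the minus sign, and the bound holds trivially with constant $1/p$.

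For weak lower semicontinuity, take $u_k\rightharpoonup u$ in $X(\Omega)$ and pass to a subsequence realizing $\liminf_k \mathcal{I}_\mu(u_k)$. The sequence is bounded, so Theorem \ref{CCP thm} applies along a further subsequence and yields measures $\omega,\sigma$ satisfying \eqref{ccp pr 1}--\eqref{ccp pr 3}. The key point is to compute the total masses of these measures rather than test them against compactly supported functions.

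By the tightness shown in the proof of Theorem \ref{CCP thm}, weak-$*$ convergence upgrades to convergence of total masses. Concretely, we can test against $\psi\in C_c(\mathbb{R}^N)$ with $\psi\equiv1$ on a large ball; the tails are uniformly small, and they tend to zero as the ball grows by the estimate involving $\operatorname{dist}(x,\overline\Omega)^{-N-ps}$. This gives
\begin{equation*}
\|u_k\|_X^p\to \omega(\mathbb{R}^N)\quad\text{and}\quad \int_\Omega \frac{|u_k|^p}{|x|^{p\theta}}\,dx\to \sigma(\mathbb{R}^N).
\end{equation*}
The Hardy weight causes no trouble here, since all $u_k$ vanish outside $\Omega$. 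From \eqref{ccp pr 2} and \eqref{ccp pr 3} we then obtain
\begin{equation*}
p\lim_k \mathcal{I}_\mu(u_k)=\omega(\mathbb{R}^N)-\mu\,\sigma(\mathbb{R}^N)\ge \|u\|_X^p+\omega_0-\mu\int_\Omega\frac{|u|^p}{|x|^{p\theta}}\,dx-\mu\sigma_0 .
\end{equation*}

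It remains to show $\omega_0-\mu\sigma_0\ge 0$. If $\mu\le0$ this is immediate. If $0<\mu<\bar\mu(\theta)$, then $\omega_0\ge\bar\mu(\theta)\sigma_0\ge\mu\sigma_0$. Either way the right-hand side is at least $p\,\mathcal{I}_\mu(u)$, which proves $\liminf_k\mathcal{I}_\mu(u_k)\ge\mathcal{I}_\mu(u)$.

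The main obstacle is the mass-convergence step. Weak-$*$ convergence in $\mathcal{M}(\mathbb{R}^N)$ only controls integrals against compactly supported test functions, so we must invoke the uniform tightness proved earlier, which ensures that no mass escapes to infinity through the nonlocal term. The remaining steps are direct algebra using \eqref{ccp pr 2} and \eqref{ccp pr 3}.
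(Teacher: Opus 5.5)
Your proof is correct and follows the same route as the paper. Coercivity comes from the mixed interpolated Hardy inequality \eqref{mixed Hardy use}, and weak lower semicontinuity comes from Theorem \ref{CCP thm} together with $\omega_0-\mu\sigma_0\ge(\bar\mu(\theta)-\mu)\sigma_0\ge 0$. Your use of uniform tightness to pass from weak-$*$ convergence to convergence of the total masses $\omega(\mathbb{R}^N)$ and $\sigma(\mathbb{R}^N)$ makes explicit a step the paper leaves implicit, and your treatment of the case $\mu\le 0$ is a harmless extra.
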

\begin{proof}
Let $\{u_k\}$ be a sequence such that $u_k \rightharpoonup u$ in $X(\Omega)$.   By Theorem \ref{CCP thm} there exist two positive measures $\omega$ and $\sigma$, verifying \eqref{ccp pr 1}-\eqref{ccp pr 3}. Since $\mu < \bar{\mu}(\theta)$, Theorem \ref{CCP thm} yields
\begin{align*}
\liminf _{k \rightarrow \infty} \mathcal{I}_{\mu}\left(u_k\right) & =\liminf_{k \rightarrow \infty}\frac{1}{p} \left(\int_{\Omega}\left|\nabla u_k\right|^p dx+\iint_{\mathbb{R}^{2N}} \frac{|u_k(x)-u_k(y)|^{p}}{|x-y|^{N+sp}} dxdy-\mu \int_{\Omega} \frac{\left|u_k\right|^p}{|x|^{p\theta}} dx\right) \\
& \geq \frac{1}{p}\left(\int_{\Omega}\left|\nabla u\right|^p dx+\iint_{\mathbb{R}^{2N}} \frac{|u(x)-u(y)|^{p}}{|x-y|^{N+sp}} dxdy+\omega_0-\mu\left( \int_{\Omega} \frac{\left|u\right|^p}{|x|^{p\theta}} dx+\sigma_0\right) \right)\\
& = \mathcal{I}_{\mu}(u)+\omega_0-\mu \sigma_0 \\
& \geq \mathcal{I}_{\mu}(u)+\left(\bar{\mu}(\theta)-\mu\right) \sigma_0 \\
& \geq \mathcal{I}_{\mu}(u).
\end{align*}
This implies that the functional $\mathcal{I}_{\mu}$ is weakly lower semicontinuous. Now, using the mixed interpolated Hardy inequality \eqref{mixed Hardy use}, we obtain 
\begin{equation*}
\mathcal{I}_{\mu}(u) \geq \frac{1}{p}\left(1-\frac{\mu}{\bar{\mu}(\theta)}\right)\|u\|_{X}^{p}, ~\text{ for all }~u \in X(\Omega).
\end{equation*}
Hence, we have $\mathcal{I}_{\mu}(u) \rightarrow \infty$ as $\|u\|_X \rightarrow \infty$, which implies that $\mathcal{I}_\mu$ is coercive.
\end{proof}

\section{Proof of Theorem \ref{main result thm}} \label{sec4}

This section is devoted to the proof of Theorem \ref{main result thm}. We begin by stating the definition of a weak solution to problem \eqref{main problem 0}.

\begin{definition}
     We say that a function $u: \Omega \rightarrow \mathbb{R}$ is a weak solution of \eqref{main problem 0} if $u \in X(\Omega)$ satisfies
\begin{align*}
& \int_{\Omega}|\nabla u(x)|^{p-2} \nabla u(x) \nabla \phi(x) d x+\iint_{\mathbb{R}^{2N}} \frac{\mathcal{A}(u(x)-u(y))(\phi(x)-\phi(y))}{|x-y|^{N+sp}} dxdy\\
&-\mu \int_{\Omega} \frac{|u(x)|^{p-2}u(x)}{|x|^{p}} \phi(x) d x-\lambda \int_{\Omega} f(x, u(x)) \phi(x) d x=0,~\forall\,\phi \in X(\Omega).
\end{align*}
\end{definition}
Here, for $1<p<\infty$, the function $\mathcal{A}: \mathbb{R} \rightarrow \mathbb{R}$ is defined by
\begin{equation*}
    \mathcal{A}(t)= \begin{cases}|t|^{p-2} t & \text { if } t \neq 0, \\ 0 & \text { if } t=0.\end{cases}
\end{equation*}

We define the energy functional $\mathcal{J}_{\mu, \lambda}: X(\Omega) \rightarrow \mathbb{R}$ associated with \eqref{main problem 0}, as
\begin{align}\label{def j}
\mathcal{J}_{\mu, \lambda}(u)&=\frac{1}{p}\left(\int_{\Omega}\left|\nabla u\right|^p dx+\iint_{\mathbb{R}^{2N}} \frac{|u(x)-u(y)|^{p}}{|x-y|^{N+sp}} dxdy\right) -\frac{\mu}{p} \int_{\Omega} \frac{\left|u\right|^p}{|x|^{p\theta}} dx\nonumber\\
&\quad\quad\quad\quad\quad-\lambda \int_{\Omega} F(x, u(x)) d x,
\end{align}
where $F(x, \xi)=\int_{0}^{\xi} f(x, t) d t$, for every $(x, \xi) \in \Omega \times \mathbb{R}$. 

In order to apply Theorem \ref{abstr CPT}, we decompose the functional $\mathcal{J}_{\mu, \lambda}$ as
\begin{equation*}
\mathcal{J}_{\mu, \lambda}(u)=\mathcal{I}_{\mu}(u)-\lambda \mathcal{P}(u),
\end{equation*}
where $\mathcal{I}_{\mu}$ is defined in \eqref{Hardy functional} and
\begin{equation*}
    \mathcal{P}(u):=\int_{\Omega} F(x, u(x)) d x.
\end{equation*}

Observe that $\mathcal{I}_{\mu}$ is a G\^ateaux differentiable functional in $X(\Omega)$ with derivative given by
\begin{align*}
\langle \mathcal{I}_{\mu}^{\prime}(u),\phi \rangle&=\int_{\Omega}|\nabla u(x)|^{p-2} \nabla u(x)  \nabla \phi(x) d x\\
&+\iint_{\mathbb{R}^{2N}} \frac{\mathcal{A}(u(x)-u(y))(\phi(x)-\phi(y))}{|x-y|^{N+sp}} dxdy\\
&-\mu \int_{\Omega} \frac{|u(x)|^{p-2}u(x)}{|x|^{p}}  \phi(x) d x
\end{align*}
for every $\phi \in  X(\Omega)$. According to Lemma \ref{wls lem}, $\mathcal{I}_{\mu}$ is weakly lower semicontinuous and coercive for $\mu \in (0, \bar{\mu}(\theta))$, and clearly $\inf _{u \in X(\Omega)} \mathcal{I}_\mu(u)=0$. Indeed, using the interpolated Hardy inequality \eqref{mixed Hardy use}, we have for every $u \in  X(\Omega)$ that
\begin{equation}\label{bound for Imu}
\left(\frac{\bar{\mu}(\theta)-\mu}{p \bar{\mu}(\theta)}\right)\|u\|_{X}^{p} \leq  \mathcal{I}_{\mu}(u) \leq \frac{\|u\|_{X}^{p}}{p}.
\end{equation}

The functional $\mathcal{P}$ is well defined and continuously G\^ateaux differentiable, with G\^ateaux derivative given by
\begin{equation*}
\langle \mathcal{P}^{\prime}(u), \phi\rangle=\int_{\Omega} f(x, u(x)) \phi(x) d x
\end{equation*}
for every $\phi \in  X(\Omega)$. Moreover, using the fact the $f$ is Carath\'eodory and $X(\Omega)$ is compactly embedded in $L^r(\Omega),~1\leq r<p^*$, we conclude that $\mathcal{P}':X(\Omega) \rightarrow X(\Omega)^{*}$ is a compact operator, where $X(\Omega)^{*}$ is the dual space of $X(\Omega)$. Therefore, for any $u, \phi \in X(\Omega)$, we have 
\begin{align*}
\langle \mathcal{J}_{\mu,\lambda}^{\prime}(u), \phi\rangle
&=\int_{\Omega}|\nabla u(x)|^{p-2} \nabla u(x)  \nabla \phi(x) d x\\
&+\iint_{\mathbb{R}^{2N}} \frac{\mathcal{A}(u(x)-u(y))(\phi(x)-\phi(y))}{|x-y|^{N+sp}} dxdy\\
&-\mu \int_{\Omega} \frac{|u(x)|^{p-2}u(x)}{|x|^{p}}  \phi(x) d x-\lambda \int_{\Omega} f(x, u(x)) \phi(x) d x,
\end{align*}
and the critical points of $\mathcal{J}_{\mu, \lambda}$ are exactly the weak solutions of problem \eqref{main problem 0}.

\begin{proof}[Proof of Theorem \ref{main result thm}]
We split the proof into three steps.\\

\noindent\textbf{Step 1.} Problem \eqref{main problem 0} admits at least one nontrivial weak solution $u_{\lambda} \in X(\Omega)$.\\

We fix $\mu \in (0, \bar{\mu}(\theta))$ and $\lambda \in (0, \Lambda),$ where $\Lambda$ is defined by \eqref{value of lambdamu}. Since $0<\lambda<\Lambda$, there exists $\bar{\sigma}>0$ such that
\begin{equation}\label{PRM bound for lam}
\lambda<\Lambda(\bar{\sigma}):=\frac{q \bar{\sigma}^{p-1}}{q a_{1} C_{1}\left(\frac{p \bar{\mu}(\theta)}{\bar{\mu}(\theta)-\mu}\right)^{1 / p}+a_{2} C_{q}^{q}\left(\frac{p \bar{\mu}(\theta)}{\bar{\mu}(\theta)-\mu}\right)^{q / p} \bar{\sigma}^{q-1}}. 
\end{equation}

On the other hand, from \eqref{cond on f}, it follows that
\begin{equation}\label{bound for F}
F(x, \xi) \leq a_{1}|\xi|+a_{2} \frac{|\xi|^{q}}{q} 
\end{equation}
for every $(x, \xi) \in \Omega \times \mathbb{R}$.
Consequently, using \eqref{bound for F} yields
\begin{equation}\label{functional P bound 1}
\mathcal{P}(u)=\int_{\Omega} F(x, u(x)) d x \leqslant a_{1}\|u\|_{L^{1}(\Omega)}+\frac{a_{2}}{q}\|u\|_{L^{q}(\Omega)}^{q}.
\end{equation}
Moreover, from \eqref{bound for Imu}, we have
\begin{equation}\label{PRM bound for u}
\|u\|_{X}<\left(\frac{p \bar{\mu}(\theta) \rho}{\bar{\mu}(\theta)-\mu}\right)^{1 / p} 
\end{equation}
for every $u \in  X(\Omega)$ such that $\mathcal{I}_{\mu}(u)<\rho$ with $\rho \in (0,+\infty)$. Then, using \eqref{Sobolev emb} and \eqref{PRM bound for u}, we obtain from \eqref{functional P bound 1} that
\begin{equation*}
\mathcal{P}(u)<a_{1}C_{1} \left(\frac{p \bar{\mu}(\theta)}{\bar{\mu}(\theta)-\mu}\right)^{1 / p} \rho^{1 / p}+a_{2} \frac{C_{q}^{q}}{q}\left(\frac{p \bar{\mu}(\theta)}{\bar{\mu}(\theta)-\mu}\right)^{q / p} \rho^{q / p},
\end{equation*}
for every $u \in  X(\Omega)$ such that $\mathcal{I}_\mu(u)<\rho$. Thus, we get
\begin{equation}\label{sup bound of P}
\sup _{u \in \mathcal{I}_{\mu}^{-1}((-\infty, \rho))} \mathcal{P}(u) \leq  a_{1}C_{1} \left(\frac{p \bar{\mu}(\theta)}{\bar{\mu}(\theta)-\mu}\right)^{1 / p} \rho^{1 / p}+a_{2} \frac{C_{q}^{q}}{q}\left(\frac{p \bar{\mu}(\theta)}{\bar{\mu}(\theta)-\mu}\right)^{q / p} \rho^{q / p}.
\end{equation}

Now, for $\rho \in (0,+\infty)$, we consider the function
\begin{equation*}
\psi(\rho):=\frac{\sup _{u \in \mathcal{I}_{\mu}^{-1}((-\infty, \rho))} \mathcal{P}(u)}{\rho}.
\end{equation*}
Then, from \eqref{sup bound of P}, we obtain
\begin{equation*}
\psi(\rho) \leq a_{1} C_{1} \left(\frac{p \bar{\mu}(\theta)}{\bar{\mu}(\theta)-\mu}\right)^{1 / p} \rho^{1 / p-1}+a_{2} \frac{C_{q}^{q}}{q}\left(\frac{p \bar{\mu}(\theta)}{\bar{\mu}(\theta)-\mu}\right)^{q / p} \rho^{q / p-1} 
\end{equation*}
for every $\rho>0$.
In particular, taking $\rho:=\bar{\sigma}^{p}$, we get
\begin{equation}\label{PMR bound for kappa}
\psi\left(\bar{\sigma}^{p}\right) \leq a_{1} C_{1} \left(\frac{p \bar{\mu}(\theta)}{\bar{\mu}(\theta)-\mu}\right)^{1 / p} \bar{\sigma}^{1-p}+a_{2} \frac{C_{q}^{q}}{q}\left(\frac{p \bar{\mu}(\theta)}{\bar{\mu}(\theta)-\mu}\right)^{q / p} \bar{\sigma}^{q-p} .
\end{equation}

Now, we define the function
\begin{equation*}
\varphi\left(\bar{\sigma}^{p}\right):=\inf _{u \in \mathcal{I}_{\mu}^{-1}((-\infty, \bar{\sigma}^{p}))} \frac{\left(\sup _{v \in \mathcal{I}_{\mu}^{-1}((-\infty, \bar{\sigma}^{p}))} \mathcal{P}(v)\right)-\mathcal{P}(u)}{\bar{\sigma}^{p}-\mathcal{I}_{\mu}(u)} .
\end{equation*}
Since $u_{0} \in \mathcal{I}_{\mu}^{-1}((-\infty, \bar{\sigma}^{p}))$ and $\mathcal{I}_{\mu}\left(u_{0}\right)=\mathcal{P}\left(u_{0}\right)=0$, where $u_{0} \in X(\Omega)$ is the identically zero function, we conclude that 
\begin{equation}\label{ineq for phi and chi}
\varphi\left(\bar{\sigma}^{p}\right) \leq \psi\left(\bar{\sigma}^{p}\right),
\end{equation}
where
\begin{equation*}
\psi(\bar{\sigma}^{p}):=\frac{\sup _{v \in \mathcal{I}_{\mu}^{-1}((-\infty, \bar{\sigma}^{p}))} \mathcal{P}(v)}{\bar{\sigma}^{p}}.
\end{equation*}

Finally, using \eqref{ineq for phi and chi}, \eqref{PMR bound for kappa}, and \eqref{PRM bound for lam}, we derive
\begin{equation*}
\varphi\left(\bar{\sigma}^{p}\right) \leq \psi\left(\bar{\sigma}^{p}\right) \leq a_{1} C_{1} \left(\frac{p \bar{\mu}(\theta)}{\bar{\mu}(\theta)-\mu}\right)^{1 / p} \bar{\sigma}^{1-p}+a_{2} \frac{C_{q}^{q}}{q}\left(\frac{p \bar{\mu}(\theta)}{\bar{\mu}(\theta)-\mu}\right)^{q / p} \bar{\sigma}^{q-p}<\frac{1}{\lambda} ,
\end{equation*}
which implies
\begin{equation*}
\lambda \in\left( 0, \frac{q \bar{\sigma}^{p-1}}{q a_{1} C_{1}\left(\frac{p \bar{\mu}(\theta)}{\bar{\mu}(\theta)-\mu}\right)^{1 / p}+a_{2} C_{q}^{q}\left(\frac{p \bar{\mu}(\theta)}{\bar{\mu}(\theta)-\mu}\right)^{q / p} \bar{\sigma}^{q-1}}\right) \subseteq\left( 0,\frac{1}{\varphi\left(\bar{\sigma}^{p}\right)}\right).
\end{equation*}

Now taking $\Phi:=\mathcal{I}_\mu$, $\Psi:=\mathcal{P}$, and $\rho:=\bar{\sigma}^{p}$, we see that all the assumptions of Theorem \ref{abstr CPT} are satisfied. Hence, applying Theorem \ref{abstr CPT}, we conclude that there exists a function $u_{\lambda} \in \mathcal{I}_{\mu}^{-1}((-\infty, \bar{\sigma}^{p}))$ such that
\begin{equation*}
\mathcal{J}_{\mu, \lambda}^{\prime}\left(u_{\lambda}\right)=\mathcal{I}_{\mu}^{\prime}\left(u_{\lambda}\right)-\lambda \mathcal{P}^{\prime}\left(u_{\lambda}\right)=0.
\end{equation*}
In particular, $u_{\lambda}$ is a global minimum of the restriction of $\mathcal{J}_{\mu, \lambda}$ to $\mathcal{I}_{\mu}^{-1}((-\infty, \bar{\sigma}^{p}))$. Moreover, since $f(x, 0) \neq 0$ in $\Omega$, we have $\mathcal{P}^{\prime}(0)\neq0$. This implies $\mathcal{J}_{\mu, \lambda}^{\prime}(0)\neq 0$, since $\mathcal{I}_{\mu}^{\prime}(0)=0$. Therefore, $0$ is not a critical point of $\mathcal{J}_{\mu, \lambda}$. Thus, we have $u_{\lambda} \neq 0$. Thus, for $\mu \in (0, \bar{\mu}(\theta))$ and for every $\lambda \in( 0, \Lambda)$ the problem \eqref{main problem 0} admits a nontrivial weak solution $u_{\lambda} \in X(\Omega)$.\\

\noindent\textbf{Step 2.} We prove that $\left\|u_{\lambda}\right\|_{X} \rightarrow 0$ as $\lambda \rightarrow 0^{+}$.

As we have earlier noted  that  $\mathcal{I}_{\mu}$ is coercive, therefore $u_{\lambda} \in \mathcal{I}_{\mu}^{-1}((-\infty, \bar{\sigma}^{p}))$ is bounded in $X(\Omega)$, that is,  we have
\begin{equation*}
    \left\|u_{\lambda}\right\|_{X} \leq K, 
\end{equation*}
for some $K>0$ and for every $\lambda \in( 0, \Lambda)$.

Furthermore, by the compactness of the operator $\mathcal{P}^{\prime}$, there exists a constant $C>0$ such that
\begin{equation}\label{PMR bound for mathP}
\left|\left\langle\mathcal{P}^{\prime}\left(u_{\lambda}\right), u_{\lambda}\right\rangle\right| \leq\left\|\mathcal{P}^{\prime}\left(u_{\lambda}\right)\right\|_{X(\Omega)^{*}}\left\|u_{\lambda}\right\|_{X}<C K^{2}
\end{equation}
for every $\lambda \in( 0, \Lambda)$. 
 
Now, since $u_{\lambda}$ is a critical point of $\mathcal{J}_{\mu,\lambda}$ for every $\lambda \in ( 0, \Lambda)$, we have
$$
\left\langle\mathcal{J}_{\mu,\lambda}^{\prime}\left(u_{\lambda}\right), u_{\lambda}\right\rangle=0,
$$
which implies that
\begin{equation}\label{PMR eq for Imu}
p\mathcal{I}_{\mu}\left(u_{\lambda}\right)=\left\langle\mathcal{I}_{\mu}^{\prime}\left(u_{\lambda}\right), u_{\lambda}\right\rangle=\lambda \left\langle\mathcal{P}^{\prime}\left(u_{\lambda}\right), u_{\lambda}\right\rangle
\end{equation}
for every $\lambda \in( 0, \Lambda)$. Thus, from \eqref{PMR bound for mathP} and \eqref{PMR eq for Imu} we obtain that
\begin{equation}\label{PMR lim for Imu}
\lim _{\lambda \rightarrow 0^{+}} \mathcal{I}_{\mu}\left(u_{\lambda}\right)=0 .
\end{equation}

On the other hand, we have by \eqref{bound for Imu} that
\begin{equation}\label{PMR bound for umu}
\left\|u_{\lambda}\right\|_{X}^{p} \leq \frac{p \bar{\mu}(\theta)}{\bar{\mu}(\theta)-\mu} \mathcal{I}_{\mu}\left(u_{\lambda}\right), 
\end{equation}
for every $\lambda \in(0, \Lambda)$. Therefore, from \eqref{PMR lim for Imu} and \eqref{PMR bound for umu} it follows that
\begin{equation*}
\lim _{\lambda \rightarrow 0^{+}}\left\|u_{\lambda}\right\|_{X}=0.
\end{equation*}

\textbf{Step 3.} We show that the function $\lambda \mapsto \mathcal{J}_{\mu, \lambda}\left(u_{\lambda}\right)$ is negative and strictly decreasing in $(0, \Lambda).$

Note that the restriction of the functional $\mathcal{J}_{\mu,\lambda}$ to $\mathcal{I}_{\mu}^{-1}((-\infty, \bar{\sigma}^{p}))$ admits a global minimum, which is a local minimum of $\mathcal{J}_{\mu,\lambda}$ in $X(\Omega)$. It is clear that  $f(x, 0) \neq 0$ in $\Omega$, implies $\mathcal{P}^{\prime}(0)\neq0$. Therefore, there exists $v\in X(\Omega)$ such that $\mathcal{P}^{\prime}(0)v>0$. Consequently, there exists $t_0>0$ such that $\mathcal{P}(tv)>0$ for all $t\in(0,t_0)$. Thus, using first order Taylor's estimate on $\mathcal{P}$ and $p>1$ we obtain $\mathcal{J}_{\mu,\lambda}(tv)=-\lambda t \mathcal{P}^{\prime}(0)v + o(t)$ for sufficiently small $t>0$. In particular, we choose $t$, sufficiently small so that $\mathcal{J}_{\mu,\lambda}(tv)<0$ and $\mathcal{I}_{\mu}(tv)<\bar{\sigma}^{p}.$ Since, $u_{\lambda}$ is a global minimizer of $\mathcal{J}_{\mu,\lambda}$ on $\mathcal{I}_{\mu}^{-1}((-\infty, \bar{\sigma}^{p}))$, we get $\mathcal{J}_{\mu,\lambda}(u_{\lambda})\leq \mathcal{J}_{\mu,\lambda}(tv)<0.$ Therefore, $\mathcal{J}_{\mu,\lambda}(0)=0$ asserts that $u_{\lambda}$ is not a minimizer of $\mathcal{J}_{\mu,\lambda}$, implying that $u_{\lambda}\neq0$. Thus, we conclude that the map $\lambda \mapsto \mathcal{J}_{\mu,\lambda}\left(u_{\lambda}\right)$ is negative in $(0, \Lambda(\bar{\sigma}))$. 

Next, we prove that the map  $\lambda \mapsto \mathcal{J}_{\mu,\lambda}\left(u_{\lambda}\right)$ is strictly decreasing in $(0, \Lambda)$. For this, we write 
\begin{equation*}
\mathcal{J}_{\mu,\lambda}(u)=\lambda\left(\frac{\mathcal{I}_{\mu}(u)}{\lambda}-\mathcal{P}(u)\right)
\end{equation*}
for every $u \in X(\Omega)$. We fix $\lambda_{1},\lambda_{2}\in(0,\Lambda)$ with $\lambda_{1}<\lambda_{2}$ and assume that $u_{\lambda_{1}}, u_{\lambda_{2}} \in X(\Omega)$ are critical points of $\mathcal{J}_{\mu,\lambda}$. Further, we set
\begin{equation*}
\mathcal{E}_{\lambda_{i}}:=\inf _{u \in \mathcal{I}_{\mu}^{-1}((-\infty, \bar{\sigma}^{p}))}\left(\frac{\mathcal{I}_{\mu}(u)}{\lambda_{i}}-\mathcal{P}(u)\right)=\frac{1}{\lambda_{i}} \mathcal{J}_{\mu,\lambda_i}\left(u_{\lambda_{i}}\right), \quad i=1,2.
\end{equation*}
 As claimed before, we have $\mathcal{E}_{\lambda_{i}}<0$ (for $i=1,2$), and $\mathcal{E}_{\lambda_{2}} \leq \mathcal{E}_{\lambda_{1}}$ since $\lambda_{1}<\lambda_{2}$. Thus, we obtain
\begin{equation*}
\mathcal{J}_{\mu, \lambda_{2}}\left(u_{\lambda_{2}}\right)=\lambda_{2} \mathcal{E}_{\lambda_{2}} \leq \lambda_{2} \mathcal{E}_{\lambda_{1}}<\lambda_{1} \mathcal{E}_{\lambda_{1}}=\mathcal{J}_{\mu, \lambda_{1}}\left(u_{\lambda_{1}}\right),
\end{equation*}
which means that the map $\lambda \mapsto \mathcal{J}_{\mu, \lambda}\left(u_{\lambda}\right)$ is decreasing in $( 0, \Lambda)$.

Finally, since $\lambda\in ( 0, \Lambda)$, is arbitrary, the above conclusions are still true in $(0, \Lambda)$. The proof is complete.
\end{proof}

\begin{remark}
    By direct computation, we conclude that the parameter $\Lambda$ in Theorem \ref{main result thm} is defined as follows:
 \begin{equation*}
    \Lambda= \begin{cases}+\infty & \text { if } 1<q<p, \\ \frac{\bar{\mu}(\theta)-\mu}{a_{2} C_{p}^{p} \bar{\mu}(\theta)} & \text { if } q=p, \\ \frac{q \sigma_\mathrm{max}^{p-1}}{q a_{1} C_{1}\left(\frac{p \bar{\mu}(\theta)}{\bar{\mu}(\theta)-\mu}\right)^{1 / p}+a_{2} C_{q}^{q}\left(\frac{p \bar{\mu}(\theta)}{\bar{\mu}(\theta)-\mu}\right)^{q / p} \sigma_\mathrm{max}^{q-1}} & \text { if } p<q<p^{*},\end{cases}
\end{equation*}
where
\begin{equation*}
    \sigma_{\max}:=\left(\frac{\bar{\mu}(\theta)-\mu}{p \bar{\mu}(\theta)}\right)^{\frac{1}{p}}\left[\frac{qa_{1} C_{1}}{a_{2} C_{q}^{q}}\left(\frac{1-p}{p-q}\right)\right]^{\frac{1}{q-1}} .
\end{equation*}
In particular, if $f$ satisfies condition \eqref{cond on f} at infinity, that is, $\limsup_{|t|\to\infty}\frac{|f(x,t)|}{|t|^{q-1}}<\infty$ with $q \in (1, p)$, then Theorem \ref{main result thm} implies that, for any $\lambda > 0$, problem \eqref{main problem 0} admits at least one nontrivial weak solution.
\end{remark}

\section{Proof of Theorem \ref{main result thm 1}} \label{sec5}
This section is devoted to the proof of Theorem \ref{main result thm 1}. We first state the definition of a weak solution to problem \eqref{main problem 0.1}. 

\begin{definition}
     We say that a function $u: \Omega \rightarrow \mathbb{R}$ is a weak solution of \eqref{main problem 0.1} if $u \in X(\Omega)$ and
\begin{align*}
& \int_{\Omega}|\nabla u(x)|^{p-2} \nabla u(x) \nabla \phi(x) d x+\iint_{\mathbb{R}^{2N}} \frac{\mathcal{A}(u(x)-u(y))(\phi(x)-\phi(y))}{|x-y|^{N+sp}} dxdy\\
&-\mu \int_{\Omega} \frac{|u(x)|^{p-2}u(x)}{|x|^{p}} \phi(x) d x-\lambda \int_{\Omega} |u(x)|^{r-2}u(x) \phi(x) d x=0
\end{align*}
for every $\phi \in X(\Omega)$.
\end{definition}

We define the energy functional $\mathcal{F}_{\mu, \lambda}\in C^1( X(\Omega),\mathbb{R})$ associated with \eqref{main problem 0.1}, as
\begin{equation}\label{MP functional}
    \mathcal{F}_{\mu, \lambda}(u)=\frac{1}{p} \int_{\Omega}|\nabla u|^{p} d x+\iint_{\mathbb{R}^{2N}}  \frac{|u(x)-u(y)|^p}{|x-y|^{N+p s}} d x d y-\frac{\mu}{p} \int_{\Omega} \frac{|u|^{p}}{|x|^{p\theta}} d x-\frac{\lambda}{r} \int_{\Omega}|u|^{r} d x.
\end{equation}

Recall that a sequence $\{u_k\} \subset X(\Omega)$ satisfying
\begin{equation}\label{PS seq conds}
\lim_{k\to\infty} \mathcal{F}_{\mu,\lambda}(u_k)=c\in \mathbb{R} \quad \text{and} \quad \lim_{k\to\infty} \mathcal{F}_{\mu,\lambda}^{\prime}(u_k)=0 
\end{equation}
is called a Palais–Smale sequence for $\mathcal{F}_{\mu,\lambda}$ at level $c$. Furthermore, we say that the functional $\mathcal{F}_{\mu, \lambda}$ satisfies the Palais-Smale condition (in short $(\mathrm{PS})_c$) if any sequence satisfying \eqref{PS seq conds} admits a convergent subsequence.

Since the norm of the space $X(\Omega)$ involves the gradient term, we first establish pointwise convergence of the gradient to verify the $(\mathrm{PS})_c$ condition. To this end, we prove the following lemma, which is inspired by \cite[Lemma 2.2]{SFV-2024} and \cite[Lemma 3.2]{MGS2025}.

\begin{lem}\label{grad conv lem}
  Let $\mathcal{F}_{\mu,\lambda}$ be the functional defined as in \eqref{MP functional}. Assume that $\lambda>0$ and $\mu \in (0,\bar{\mu}(\theta))$. If $\left\{u_{k}\right\}$ is a Palais–Smale sequence of $\mathcal{F}_{\mu,\lambda}$, then, there exists $u\in X(\Omega)$ such that up to a subsequence, we have $\nabla u_{k}(x) \rightarrow \nabla u(x)$ a.e. in $\Omega$ as $k \rightarrow \infty$.  
\end{lem}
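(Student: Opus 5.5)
First I will show that $\{u_k\}$ is bounded in $X(\Omega)$. From \eqref{PS seq conds} we get, for $k$ large,
\[
\mathcal{F}_{\mu,\lambda}(u_k)-\tfrac1r\langle \mathcal{F}_{\mu,\lambda}'(u_k),u_k\rangle \le c+1+o(1)\|u_k\|_X .
\]
Since $r>p$, the left-hand side is bounded below by $\left(\frac1p-\frac1r\right)\left(1-\frac{\mu}{\bar\mu(\theta)}\right)\|u_k\|_X^p$; this uses the mixed Hardy inequality \eqref{mixed Hardy use}, and I treat the Gagliardo term with the same $\frac1p$ normalization as in \eqref{bound for Imu}. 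This gives boundedness. By reflexivity and the compact embedding \eqref{Rel-Kon comp emb}, a subsequence satisfies $u_k\rightharpoonup u$ in $X(\Omega)$, $u_k\to u$ in $L^m(\Omega)$ for all $m<p^*$, and $u_k\to u$ a.e.

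For the gradients I will follow Boccardo--Murat \cite{BM-1992}, using a truncated test function. Fix $\varepsilon>0$, let $T_\varepsilon(t)=\max\{-\varepsilon,\min\{\varepsilon,t\}\}$, and test $\mathcal{F}_{\mu,\lambda}'(u_k)$ with $\phi_k=T_\varepsilon(u_k-u)\in X(\Omega)$. This sequence is bounded in $X(\Omega)$, so $\langle \mathcal{F}_{\mu,\lambda}'(u_k),\phi_k\rangle\to0$. I then handle the remaining terms one by one.

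\emph{Nonlinear term.} $\lambda\int|u_k|^{r-2}u_k\phi_k\to0$, because $|\phi_k|\le|u_k-u|\to0$ in $L^r$ and $|u_k|^{r-1}$ is bounded in $L^{r'}$.

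\emph{Hardy term.} $\mu\int |u_k|^{p-2}u_k\phi_k|x|^{-p\theta}$ is bounded by $\varepsilon\mu\int|u_k|^{p-1}|x|^{-p\theta}$. By H\"older and the Hardy inequality this is at most $C\varepsilon$, since $|x|^{-p\theta}$ is integrable on bounded $\Omega$ because $p\theta\le p<N$.

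\emph{Nonlocal term.} $\mathcal{A}$ is monotone and $T_\varepsilon$ is nondecreasing, so
\[
\big(\mathcal{A}(u_k(x)-u_k(y))-\mathcal{A}(u(x)-u(y))\big)\big(\phi_k(x)-\phi_k(y)\big)\ge0 .
\]
The part containing $\mathcal{A}(u(x)-u(y))$ tends to $0$ by weak convergence: $\phi_k\rightharpoonup0$ in $W^{s,p}$, since it is bounded and tends to $0$ in $L^p$. Hence $\liminf$ of the nonlocal term is $\ge0$. The term $\int|\nabla u|^{p-2}\nabla u\cdot\nabla\phi_k\to0$ for the same reason.

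Combining these estimates gives
\[
\limsup_{k\to\infty}\int_{\{|u_k-u|\le\varepsilon\}}\big(|\nabla u_k|^{p-2}\nabla u_k-|\nabla u|^{p-2}\nabla u\big)\cdot(\nabla u_k-\nabla u)\,dx\le C\varepsilon .
\]
Call the integrand $e_k\ge0$. For $\delta\in(0,1)$ I split $\Omega$ into $\{|u_k-u|\le\varepsilon\}$ and its complement and apply H\"older to $e_k^\delta$:
\[
\int_\Omega e_k^\delta\le |\Omega|^{1-\delta}(C\varepsilon)^\delta+\Big(\int_\Omega e_k\Big)^\delta\,|\{|u_k-u|>\varepsilon\}|^{1-\delta}.
\]
Here $\int e_k$ is bounded, and $|\{|u_k-u|>\varepsilon\}|\to0$. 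Letting $k\to\infty$ and then $\varepsilon\to0$ gives $e_k^\delta\to0$ in $L^1$, so $e_k\to0$ a.e. along a subsequence. Strict monotonicity of $\xi\mapsto|\xi|^{p-2}\xi$ together with boundedness of $\nabla u_k(x)$ where $e_k(x)\to0$ then yields $\nabla u_k\to\nabla u$ a.e.

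I expect the main obstacle to be the Hardy term. The bound $C\varepsilon$ needs uniform control of $\int|u_k|^{p-1}|x|^{-p\theta}$. This is fine via H\"older as $(\int|u_k|^p|x|^{-p\theta})^{(p-1)/p}(\int|x|^{-p\theta})^{1/p}$, which is bounded by the Hardy inequality and $p\theta<N$; the remaining steps are standard.
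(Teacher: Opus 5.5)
Your proposal is correct and follows essentially the same route as the paper: boundedness from $\mathcal{F}_{\mu,\lambda}(u_k)-\frac1r\langle\mathcal{F}_{\mu,\lambda}'(u_k),u_k\rangle$ and the mixed Hardy inequality; the Boccardo--Murat test function $T_\varepsilon(u_k-u)$, with the nonlocal term discarded by monotonicity and the Hardy term bounded by $C\varepsilon$ via H\"older and $|x|^{-p\theta}\in L^1(\Omega)$; then the $e_k^\delta$ splitting and strict monotonicity of $\xi\mapsto|\xi|^{p-2}\xi$. Your version is slightly tidier in several places: you keep the $o(1)\|u_k\|_X$ term in the boundedness step, you justify the sign of the nonlocal term directly, you avoid the paper's Vitali step by not subtracting the Hardy part of $\mathcal{F}_{\mu,\lambda}'(u)$, and you use a real truncation level $\varepsilon\to0$, whereas the paper fixes $n\in\mathbb{N}$ and then lets $n\to0^+$.
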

\begin{proof}
    Since $\left\{u_{k}\right\}$ is a $(\mathrm{PS})_c$ sequence, we have
    \begin{align}\label{boundedness pr 1}
    c+o_k(1) & =\mathcal{F}_{\mu, \lambda}\left(u_{k}\right)-\frac{1}{r}\left\langle \mathcal{F}_{\mu, \lambda}^{\prime}\left(u_{k}\right), u_{k}\right\rangle \nonumber\\ 
    &=\frac{1}{p}\left(\int_{\Omega} |\nabla u_k|^{p}dx+\iint_{\mathbb{R}^{2N}}  \frac{|u_k(x)-u_k(y)|^p}{|x-y|^{N+p s}} d x d y-\mu\int_{\Omega} \frac{|u_k|^{p}}{|x|^{p\theta}} d x\right)-\frac{\lambda}{r}\int_{\Omega}|u_k|^{r}dx\nonumber\\
    &-\frac{1}{r}\left(\int_{\Omega} |\nabla u_k|^{p}dx+\iint_{\mathbb{R}^{2N}}  \frac{|u_k(x)-u_k(y)|^p}{|x-y|^{N+p s}} d x d y-\mu\int_{\Omega} \frac{|u_k|^{p}}{|x|^{p\theta}} d x-\lambda\int_{\Omega}|u_k|^{r}dx\right)\nonumber\\
    &=\left(\frac{1}{p}-\frac{1}{r}\right)\left\|u_{k}\right\|_{X}^{p} 
    -\mu\left(\frac{1}{p}-\frac{1}{r}\right) \int_{\Omega} \frac{|u_{k}|^{p}}{|x|^{p\theta}} dx \\
    & \geq\left(\frac{1}{p}-\frac{1}{r}\right)\left(1-\frac{\mu}{\bar{\mu}}\right)\left\|u_{k}\right\|_{X}^{p}=C\left\|u_{k}\right\|_{X}^{p},\nonumber
    \end{align}
with $C>0$. Hence, we conclude that the sequence $\left\{u_{k}\right\}$ is bounded in $X(\Omega)$. Therefore, from the compact embedding \eqref{Rel-Kon comp emb}, up to a subsequence, still relabeled $\{u_{k}\}$, we have
\begin{align}\label{convergences from bound}
    \begin{array}{ll}
u_{k} \rightharpoonup u \text{ weakly in } X(\Omega), & \nabla u_{k} \rightharpoonup \nabla u \text{ weakly in }\left(L^{p}(\Omega)\right)^{N}, \\
u_{k}(x) \rightarrow u(x) \text{ pointwise a.e. in } \Omega, & \left|u_{k}(x)\right| \leq h(x) \text{ a.e. in } \Omega, \\
u_{k} \rightarrow u \text{ strongly in } L^{q}(\Omega),  
\end{array}
\end{align}
as $k \rightarrow \infty$, where $q \in\left[p, p^{*}\right)$ and $h \in L^{p^{*}}(\Omega)$ with $p^{*}=\frac{p N}{N-p}$. Moreover, from \eqref{PS seq conds}, we have
\begin{equation}\label{bound from ps cond}
\left|\left\langle\mathcal{F}_{\mu, \lambda}^{\prime}\left(u_{k}\right), \phi\right\rangle\right| \leq \epsilon_{k}\|\phi\|_{X} 
\end{equation}
for all $\phi \in X(\Omega)$, and for some $\epsilon_{k}>0$ such that $\epsilon_{k} \rightarrow 0$ as $k\to \infty$. 

Now we fix $n\in \mathbb{N}$ and define the truncation functions $\tau_{n}: \mathbb{R} \rightarrow \mathbb{R}$ as
\begin{equation*}
    \tau_{n}(s)= \begin{cases}s & \text { if }|s| \leq n, \\ n \frac{s}{|s|} & \text { if }|s|>n.\end{cases}
\end{equation*}
Since $\tau_n\left(u_k-u\right)$ is bounded in $X(\Omega)$ and converges to 0 almost everywhere in $\Omega$, it follows (up to a subsequence) that $\tau_n\left(u_k-u\right) \rightharpoonup 0$ weakly in $X(\Omega)$. Therefore, applying H\"older inequality and \eqref{convergences from bound}, we get
\begin{equation}\label{weak conv grad}
    \lim _{k \rightarrow \infty} \int_{\Omega}|\nabla u|^{p-2} \nabla u \nabla\left(\tau_{n}\left(u_{k}-u\right)\right) d x=0,  
\end{equation}
\begin{equation}\label{weak conv gagliardo}
    \lim _{k \rightarrow \infty} \iint_{\mathbb{R}^{2N}} \frac{\mathcal{A}(u(x)-u(y))\left(\tau_{n}\left(u_{k}-u\right)(x)-\tau_{n}\left(u_{k}-u\right)(y)\right)}{|x-y|^{N+p s}} dxdy=0, 
\end{equation}
\begin{equation}\label{weak conv subcrit}
    \lim _{k \rightarrow \infty} \int_{\Omega}|u|^{r-2}u \tau_{n}\left(u_{k}-u\right) d x=0. 
\end{equation}
Moreover, for any measurable set $U \subset \Omega$, using H\"older's inequality and \eqref{convergences from bound}, we obtain 
\begin{align*}
    \left|\int_{U} \frac{|u|^{p-2} u \tau_{n}\left(u_{k}-u\right)}{|x|^{p \theta}}dx\right| &\leq\left(\int_{U} \frac{|u|^{p}}{|x|^{p \theta}}dx\right)^{\frac{p-1}{p}}\left(\int_{U} \frac{\left|\tau_{n}\left(u_{k}-u\right)\right|^{p}}{|x|^{p \theta}}dx\right)^{\frac{1}{p}}\\
    &\leq C\left(\int_{U} \frac{|u|^{p}}{|x|^{p \theta}}dx\right)^{\frac{p-1}{p}}.
\end{align*}
Then, applying Vitali's convergence theorem yields
\begin{equation}\label{weak conv hardy}
\lim _{k \rightarrow \infty} \int_{\Omega} \frac{|u|^{p-2} u \tau_{n}\left(u_{k}-u\right)}{|x|^{p \theta}}dx=0. 
\end{equation}
Consequently, combining \eqref{weak conv grad}-\eqref{weak conv hardy}, we deduce 
\begin{equation*}
      \left\langle\mathcal{F}_{\mu, \lambda}^{\prime}(u), \tau_{n}\left(u_{k}-u\right)\right\rangle=o_{k}(1).
\end{equation*}
Therefore, substituting $v=\tau_{n}\left(u_{k}-u\right)$ in \eqref{bound from ps cond}, we get
\begin{equation*}
\left|\left\langle\mathcal{F}_{\mu, \lambda}^{\prime}\left(u_{k}\right)-\mathcal{F}_{\mu, \lambda}^{\prime}(u), \tau_{n}\left(u_{k}-u\right)\right\rangle\right| \leq \epsilon_{k}\left\|\tau_{n}\left(u_{k}-u\right)\right\|_{X}+o_k(1), 
\end{equation*}
which implies
\begin{align}\label{ineq for grad}
& \int_{\Omega}\left(\left|\nabla u_{k}\right|^{p-2} \nabla u_{k}-|\nabla u|^{p-2} \nabla u\right) \nabla\left(\tau_{n}\left(u_{k}-u\right)\right)dx \nonumber\\
& \quad+\iint_{\mathbb{R}^{2N}}\frac{\left[\mathcal{A}\left(u_{k}(x)-u_{k}(y)\right)-\mathcal{A}(u(x)-u(y))\right]\left(\tau_{n}\left(u_{k}-u\right)(x)\tau_{n}\left(u_{k}-u\right)(y)\right)}{|x-y|^{N+p s}}dxdy \nonumber\\
& \quad \leq \mu\left|\int_{\Omega} \frac{(\left|u_{k}\right|^{p-2} u_{k}-|u|^{p-2} u)\left(\tau_{n}\left(u_{k}-u\right)\right)}{|x|^{p \theta}}dx\right|\\
& \quad +\lambda\left|\int_{\Omega}\left(|u_{k}|^{r-2}u_k-|u|^{r-2}u\right)\left(\tau_{n}\left(u_{k}-u\right)\right)dx\right|+\epsilon_{k}\left\|\tau_{n}\left(u_{k}-u\right)\right\|_{X}+o_k(1).\nonumber
\end{align}
As proved in \cite[Lemma 2.2]{SFV-2024}, we have
\begin{align}\label{gagl part pos}
 \iint_{\mathbb{R}^{2N}}\frac{\left[\mathcal{A}\left(u_{k}(x)-u_{k}(y)\right)-\mathcal{A}(u(x)-u(y))\right]\left(\tau_{n}\left(u_{k}-u\right)(x)\tau_{n}\left(u_{k}-u\right)(y)\right)}{|x-y|^{N+p s}}dxdy \geq 0.
\end{align}
Then, using \eqref{ineq for grad} and \eqref{gagl part pos}, we obtain
\begin{align}\label{bound gor grad 2}
& \int_{\Omega}\left(\left|\nabla u_{k}\right|^{p-2} \nabla u_{k}-|\nabla u|^{p-2} \nabla u\right) \nabla\left(\tau_{n}\left(u_{k}-u\right)\right) dx \nonumber \\
&\quad \quad \quad \quad \quad  \leq \mu\left|\int_{\Omega} \frac{(\left|u_{k}\right|^{p-2} u_{k}-|u|^{p-2} u)\left(\tau_{n}\left(u_{k}-u\right)\right)}{|x|^{p \theta}}dx\right| \\
&\quad \quad \quad \quad \quad  +\lambda\left|\int_{\Omega}\left(|u_{k}|^{r-2}u_k-|u|^{r-2}u\right)\left(\tau_{n}\left(u_{k}-u\right)\right)dx\right| +\epsilon_{k}\left\|\tau_{n}\left(u_{k}-u\right)\right\|_{X}+o_k(1).\nonumber 
\end{align}
Passing to the limit in \eqref{bound gor grad 2} and using \eqref{convergences from bound} and \eqref{weak conv hardy}, we deduce
\begin{align}\label{lim of grad bound}
&\limsup _{k \rightarrow \infty} \int_{\Omega}\left(\left|\nabla u_{k}\right|^{p-2} \nabla u_{k}-\left|\nabla u\right|^{p-2} \nabla u\right)\nabla\left(\tau_{n}\left(u_{k}-u\right)\right)dx \nonumber \\
& \quad \quad \quad \quad \quad   \quad \quad  \leq \mu \limsup _{k \rightarrow \infty}\left|\int_{\Omega} \frac{(\left|u_{k}\right|^{p-2} u_{k}-|u|^{p-2} u)(\tau_{n}\left(u_{k}-u\right))}{|x|^{p \theta}}dx\right| \\
& \quad \quad \quad \quad \quad   \quad \quad  =\mu \limsup _{k \rightarrow \infty}\left|\int_{\Omega} \frac{\left|u_{k}\right|^{p-2} u_{k}\left(\tau_{n}\left(u_{k}-u\right)\right)}{|x|^{p \theta}}dx\right| \nonumber \\
& \quad \quad \quad \quad \quad   \quad \quad  \leq \mu j \limsup _{k \rightarrow \infty}\left(\int_{\Omega} \frac{\left|u_{k}\right|^{p}}{|x|^{p \theta}}dx\right)^{\frac{p-1}{p}}\left(\int_{\Omega} \frac{1}{|x|^{p \theta}}dx\right)^{\frac{1}{p}} \leq n C \nonumber 
\end{align}
for some $C>0$. Now, we set 
\begin{equation*}
    e_{k}(x)=\left[\left|\nabla u_{k}(x)\right|^{p-2} \nabla u_{k}(x)-|\nabla u(x)|^{p-2} \nabla u(x)\right] \nabla\left(u_{k}(x)-u(x)\right).
\end{equation*}
Applying Simon's inequalities from \cite{Simon-1978}, we deduce that $e_{k}(x) \geq 0$ a.e. in $\Omega$. From the boundedness of $\{\nabla u_{k}\}$ in $L^{p}\left(\Omega ; \mathbb{R}^{N}\right)$ and of $\{\left|\nabla u_{k}\right|^{p-2} \nabla u_{k}\}$ in $L^{p^{\prime}}\left(\Omega;\mathbb{R}^{N}\right)$ by \eqref{convergences from bound}, we obtain
\begin{equation}\label{Ap bound}
     0 \leq \int_\Omega e_{k}(x) dx\leq\left\|\left|\nabla u_{k}\right|^{p-2} \nabla u_{k}-\left|\nabla u\right|^{p-2} \nabla u\right\|_{L^{p^{\prime}}(\Omega)}\left\|\nabla u_{k}-\nabla u\right\|_{L^p(\Omega)}\leq C_{0},
\end{equation}   
for some constant $C_{0}$ independent of $k$, where $p^{\prime}=\frac{p}{p-1}$ be the conjugate exponent of $p$.

 For fixed $n, k \in \mathbb{N}$, we split $\Omega$ into
\begin{equation*}
S_{k}^{n}=\left\{x \in \Omega:\left|u_{k}(x)-u(x)\right| \leq n\right\}, \quad G_{k}^{n}=\left\{x \in \Omega:\left|u_{k}(x)-u(x)\right|>n\right\} .
\end{equation*}
Then, taking $\delta \in(0,1)$ and using H\"older inequality and \eqref{Ap bound} together with \eqref{lim of grad bound}, we get
\begin{align*}
\int_{\Omega} e_{k}^{\delta}dx & =\int_{S_{k}^{n}} e_{k}^{\delta}dx+\int_{G_{k}^{n}} e_{k}^{\delta}dx \\
& \leq\left(\int_{S_{k}^{n}} e_{k}dx\right)^{\delta}\left|S_{k}^{n}\right|^{1-\delta}+\left(\int_{G_{k}^{n}} e_{k}dx\right)^{\delta}\left|G_{k}^{n}\right|^{1-\delta} \\
& \leq(n C)^{\delta}\left|S_{k}^{n}\right|^{1-\delta}+(C_0)^{\delta}\left|G_{k}^{n}\right|^{1-\delta}.
\end{align*}
Since $\left|G_{k}^{n}\right| \rightarrow 0$ as $k \rightarrow \infty$, we obtain
\begin{equation*}
0 \leq \limsup _{k \rightarrow \infty} \int_{\Omega} e_{k}^{\delta} d x \leq(n C)^{\delta}|\Omega|^{1-\delta}.
\end{equation*}
Taking $n \rightarrow 0^{+}$, we derive that $e_{k}^{\delta} \rightarrow 0$ in $L^{1}(\Omega)$ as $k \rightarrow \infty$. Hence, passing to a subsequence, we have $e_{k}(x) \rightarrow 0$ a.e. in $\Omega$ as $k \rightarrow \infty$. 
Therefore, by \cite[Lemma 3]{Demengel} we conclude that
\begin{equation*}
    \nabla u_{k} \rightarrow \nabla u \quad \text { a.e. in } \Omega.
\end{equation*}
 This completes the proof.
\end{proof}

\begin{lem}\label{PS cond}
     Let $\lambda>0$ and $\mu \in (0,\bar{\mu}(\theta))$. Then $\mathcal{F}_{\mu,\lambda}$ satisfies the $(\mathrm{PS})_c$ condition for all $c\in \mathbb{R}$.
\end{lem}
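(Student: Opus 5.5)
Let $\{u_k\}\subset X(\Omega)$ be a $(\mathrm{PS})_c$ sequence for $\mathcal{F}_{\mu,\lambda}$. The plan is to show it has a subsequence converging strongly in $X(\Omega)$.

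\textbf{Step 1: extraction and pointwise convergence.} The computation at the start of the proof of Lemma \ref{grad conv lem} gives boundedness of $\{u_k\}$ in $X(\Omega)$. So, up to a subsequence, all convergences in \eqref{convergences from bound} hold, with weak limit $u$. Lemma \ref{grad conv lem} then gives, after a further subsequence, $\nabla u_k\to\nabla u$ a.e. in $\Omega$.

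\textbf{Step 2: the Hardy term via Theorem \ref{CCP thm}.} Apply Theorem \ref{CCP thm} to $\{u_k\}$. This yields measures $\omega,\sigma$ and numbers $\omega_0,\sigma_0\ge0$ with
\[
\int_\Omega\frac{|u_k|^p}{|x|^{p\theta}}dx\to\int_\Omega\frac{|u|^p}{|x|^{p\theta}}dx+\sigma_0,
\qquad
\liminf_k\|u_k\|_X^p\ge\|u\|_X^p+\omega_0 .
\]
Here the second bound is obtained by testing \eqref{ccp pr 1} with a cutoff equal to $1$ on $\mathcal O$, using tightness.

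\textbf{Step 3: the key identity.} Subtract $\langle\mathcal{F}'_{\mu,\lambda}(u_k),u_k\rangle=o_k(1)$ from $\langle\mathcal{F}'_{\mu,\lambda}(u_k),u\rangle=o_k(1)$.
\begin{itemize}
\item The subcritical terms satisfy $\int|u_k|^{r-2}u_k(u_k-u)\to0$ by the strong $L^r$ convergence.
\item Since $\nabla u_k\to\nabla u$ a.e. and the sequence is bounded, $|\nabla u_k|^{p-2}\nabla u_k\rightharpoonup|\nabla u|^{p-2}\nabla u$ in $L^{p'}$. Likewise, from a.e. convergence of $u_k(x)-u_k(y)$, the nonlocal fluxes $\mathcal A(u_k(x)-u_k(y))|x-y|^{-(N+ps)/p'}$ converge weakly in $L^{p'}(\mathbb{R}^{2N})$.
\item In the same way, $|u_k|^{p-2}u_k|x|^{-\theta(p-1)}$ converges weakly in $L^{p'}$.
\end{itemize}
Hence $\langle\mathcal{F}'_{\mu,\lambda}(u_k),u\rangle\to\langle\mathcal{F}'_{\mu,\lambda}(u),u\rangle$, and therefore
\[
\lim_k\Big(\|u_k\|_X^p-\mu\int_\Omega\frac{|u_k|^p}{|x|^{p\theta}}dx\Big)=\|u\|_X^p-\mu\int_\Omega\frac{|u|^p}{|x|^{p\theta}}dx ,
\]
using also $\int|u_k|^r\to\int|u|^r$.

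Combining with Step 2:
\[
\|u\|_X^p-\mu\int\frac{|u|^p}{|x|^{p\theta}}\ \ge\ \|u\|_X^p+\omega_0-\mu\int\frac{|u|^p}{|x|^{p\theta}}-\mu\sigma_0 .
\]
So $\omega_0\le\mu\sigma_0$. Together with $\bar\mu(\theta)\sigma_0\le\omega_0$ from \eqref{ccp pr 3} and $\mu<\bar\mu(\theta)$, this forces $\sigma_0=0$ and $\omega_0=0$.

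\textbf{Step 4: conclusion.} With $\sigma_0=0$ we get $\int|u_k|^p|x|^{-p\theta}\to\int|u|^p|x|^{-p\theta}$. Then $\lim_k\|u_k\|_X^p=\|u\|_X^p$, and uniform convexity of $X(\Omega)$ (Radon--Riesz) combined with $u_k\rightharpoonup u$ gives $u_k\to u$ strongly.

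\textbf{Main obstacle.} The delicate point is Step 3. One must justify convergence of the nonlinear terms tested against the fixed $u$, which needs a.e. convergence of gradients from Lemma \ref{grad conv lem}. One must also make sure the possible concentration of $\omega$ beyond $\omega_0\delta_0$ (e.g. singular parts away from $0$) does not spoil the inequality. I would handle that by using only the lower bound on total mass: $\omega\ge(\dots)+\omega_0\delta_0$ suffices, since we only need $\liminf\|u_k\|^p\ge\|u\|^p+\omega_0$.
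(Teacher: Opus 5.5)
Your argument is correct, but it is not the route the paper takes. The paper does not invoke Theorem \ref{CCP thm} in this proof. It tests with $u_k-u$ and identifies the weak limits of the three fluxes, much as in your Step 3. It then applies the Brezis--Lieb lemma separately to $\int_\Omega|\nabla u_k|^p$, to $[u_k]_{s,p}^p$ and to the Hardy integral, which gives $o_k(1)=\|u_k-u\|_X^p-\mu\int_\Omega |u_k-u|^p|x|^{-p\theta}\,dx+o_k(1)$. Finally, the interpolated Hardy inequality applied to $u_k-u$ yields $o_k(1)\ge(1-\mu/\bar{\mu}(\theta))\|u_k-u\|_X^p$.

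You instead let the concentration-compactness principle do this bookkeeping:
\begin{itemize}
\item the inequality $\bar{\mu}(\theta)\sigma_0\le\omega_0$ plays the role of Hardy's inequality applied to $u_k-u$;
\item the measure bound $\omega\ge(\cdots)+\omega_0\delta_0$ replaces the Brezis--Lieb splitting of the gradient and Gagliardo terms;
\item Radon--Riesz in the uniformly convex space $X(\Omega)$ replaces the direct estimate of $\|u_k-u\|_X$.
\end{itemize}

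The paper's route is more elementary and self-contained. It uses the a.e. convergence of $\nabla u_k$ both to identify the flux limits and to run Brezis--Lieb on $|\nabla u_k|^p$. Yours uses Lemma \ref{grad conv lem} only to obtain $\langle \mathcal{F}'_{\mu,\lambda}(u),u\rangle=0$. It also makes explicit that a Dirac mass at the origin is the only possible loss of compactness, which is the template that carries over to problems with additional critical terms. The price is that it rests on the heavier Theorem \ref{CCP thm}.

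One small point in your Step 2 needs fixing. A single cutoff equal to $1$ on $\mathcal{O}$ only captures the part of the nonlocal energy density of $u$ lying over $\mathcal{O}$. That is in general strictly less than $[u]_{s,p}^p$, since this density does not vanish outside $\Omega$. To get $\liminf_k\|u_k\|_X^p\ge\|u\|_X^p+\omega_0$, take the supremum over cutoffs $0\le\phi\le 1$, i.e. use lower semicontinuity of total mass under weak-$*$ convergence. With that fix the step goes through.
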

\begin{proof}
    Let $\{u_k\}$ be a $(\mathrm{PS})_c$ sequence of the functional $\mathcal{F}_{\mu,\lambda}$. Then, from \eqref{boundedness pr 1}, we have that ${u_k}$ is bounded in $X(\Omega)$, and all the convergences in \eqref{convergences from bound} hold true. Moreover, substituting $v=u_{k}-u$ in \eqref{bound from ps cond}, we obtain
\begin{align}\label{ps cond pr conv 1}
\left\langle\mathcal{F}_{\mu, \lambda}^{\prime}\left(u_{k}\right), u_{k}-u\right\rangle \rightarrow 0 \quad \text{ as } k \rightarrow \infty.
\end{align}

Now from \eqref{convergences from bound} and Lemma \ref{grad conv lem}, we deduce that
$$
\begin{array}{rlr}
\left|\nabla u_{k}(x)\right|^{p-2} \nabla u_{k}(x) & \rightarrow|\nabla u(x)|^{p-2} \nabla u(x) & \text { pointwise a.e. in } \Omega, \\
\frac{\left|u_{k}(x)\right|^{p-2} u_{k}(x)}{|x|^{p \theta/p^{\prime}}} & \rightarrow \frac{|u(x)|^{p-2} u(x)}{|x|^{p \theta/p^{\prime}}} & \text { pointwise a.e. in } \Omega. \\
\end{array}
$$
 Moreover, the sequences $\left\{\left|\nabla u_{k}\right|^{p-2} \nabla u_{k}\right\}$ and $\left\{\frac{\left|u_{k}\right|^{p-2} u_{k}}{|x|^{p \theta/p^{\prime}}}\right\}$ are bounded in $L^{p^{\prime}}(\Omega)$, and hence, converge weakly in $L^{p^{\prime}}(\Omega)$, up to a subsequence.
 
 Let $\mathcal{A}(t)=|t|^{p-2}t$. Then as shown in the proof of \cite[Lemma 2.4]{CP-2016}, we have that 
 \begin{equation*}
     \frac{\mathcal{A}\left(u_{k}(x)-u_{k}(y)\right)}{|x-y|^{(N+s p)/p^{\prime}}}  \rightarrow \frac{\mathcal{A}(u(x)-u(y))}{|x-y|^{(N+s p)/p^{\prime}}}  \text { pointwise a.e. in } \mathbb{R}^{2 N},
 \end{equation*}
 and the sequence $\left\{\frac{\mathcal{A}\left(u_{k}(x)-u_{k}(y)\right)}{|x-y|^{(N+s p)/p^{\prime}}}\right\}$ is bounded in $L^{p^{\prime}}\left(\mathbb{R}^{2 N}\right)$. Since weak and pointwise limits coincide, passing to the limit as $k\to\infty$, we obtain, up to a subsequence, that
 \begin{align}\label{weak cons for PS}
\int_{\Omega}\left|\nabla u_{k}\right|^{p-2} \nabla u_{k} \nabla \varphi dx& \rightarrow \int_{\Omega}\left|\nabla u\right|^{p-2} \nabla u \nabla \varphi dx, \nonumber\\
\int_{\Omega} \frac{\left|u_{k}\right|^{p-2} u_{k} \varphi}{|x|^{p \theta}} d x & \rightarrow \int_{\Omega} \frac{\left|u\right|^{p-2} u \varphi}{|x|^{p \theta}} d x,  \\
 \iint_{\mathbb{R}^{2N}} \frac{\mathcal{A}\left(u_{k}(x)-u_{k}(y)\right)(\varphi(x)-\varphi(y))}{|x-y|^{N+s p}}dxdy & \rightarrow \iint_{\mathbb{R}^{2N}}  \frac{\mathcal{A}\left(u(x)-u(y)\right)(\varphi(x)-\varphi(y))}{|x-y|^{N+s p}}dxdy,\nonumber
\end{align} 
 for any $\varphi \in X(\Omega)$, since $\nabla \varphi\in L^{p}(\Omega)$, $\varphi/|x|^\theta  \in L^{p}(\Omega)$,
and $\frac{|\varphi(x)-\varphi(y)|}{|x-y|^{(N+p s) / p}} \in L^{p}\left(\mathbb{R}^{2 N}\right)$.

 In particular, taking $\varphi=u$ in \eqref{weak cons for PS}, we have as $k \rightarrow \infty$ that
\begin{align}\label{nec cons for PS}
\int_{\Omega}\left|\nabla u_{k}\right|^{p-2} \nabla u_{k} \nabla u dx& \rightarrow \int_{\Omega}|\nabla u|^{p}dx, \nonumber\\
\int_{\Omega} \frac{\left|u_{k}\right|^{p-2} u_{k} u}{|x|^{p \theta}} d x & \rightarrow \int_{\Omega} \frac{|u|^{p}}{|x|^{p \theta}} d x,  \\
 \iint_{\mathbb{R}^{2N}} \frac{\mathcal{A}\left(u_{k}(x)-u_{k}(y)\right)(u(x)-u(y))}{|x-y|^{N+s p}}dxdy & \rightarrow \iint_{\mathbb{R}^{2N}}  \frac{|u(x)-u(y)|^{p}}{|x-y|^{N+s p}}dxdy. \nonumber
\end{align}

Furthermore, by \eqref{convergences from bound}, Lemma \ref{grad conv lem}, and the Brezis-Lieb lemma \cite{B-L}, we have
\begin{align}\label{BL cons for PS}
\int_{\Omega}\left|\nabla\left(u_{k}-u\right)\right|^{p}dx & =\int_{\Omega}\left|\nabla u_{k}\right|^{p}dx-\int_{\Omega}|\nabla u|^{p}dx+o_{k}(1), \nonumber\\
{\left[u_{k}-u\right]_{s,p}^{p} } & =\left[u_{k}\right]_{s,p}^{p}-[u]_{s,p}^{p}+o_{k}(1),  \\
\int_{\Omega} \frac{\left|u_{k}-u\right|^{p}}{|x|^{p \theta}}dx & =\int_{\Omega} \frac{\left|u_{k}\right|^{p}}{|x|^{p \theta}}dx-\int_{\Omega} \frac{|u|^{p}}{|x|^{p \theta}}dx+o_{k}(1).\nonumber
\end{align}

Thus, using \eqref{convergences from bound}, \eqref{ps cond pr conv 1}, \eqref{nec cons for PS}, \eqref{BL cons for PS}, and the mixed interpolated Hardy inequality \eqref{mixed Hardy use}, we obtain
\begin{align*}
&o_k(1)=\left\langle\mathcal{F}_{\mu, \lambda}^{\prime}\left(u_{k}\right), u_{k}-u\right\rangle \\
&\quad =\int_{\Omega}\left|\nabla u_{k}\right|^{p}dx-\int_{\Omega}|\nabla u_k|^{p-2}u_k udx+\left[u_{k}\right]_{s,p}^{p}-\iint_{\mathbb{R}^{2N}} \frac{\mathcal{A}\left(u_{k}(x)-u_{k}(y)\right)(u(x)-u(y))}{|x-y|^{N+s p}}dxdy\\
&\quad +\mu\int_{\Omega} \frac{\left|u_{k}\right|^{p}}{|x|^{p \theta}}dx-\mu\int_{\Omega} \frac{|\nabla u_k|^{p-2}u_k u}{|x|^{p \theta}}dx+\int_{\Omega}\left|u_{k}\right|^{r}dx-\int_{\Omega}| u_k|^{r-2}u_k udx\\
&\quad = \int_{\Omega}\left|\nabla u_{k}\right|^{p}dx-\int_{\Omega}|\nabla u|^{p}dx+\left[u_{k}\right]_{s,p}^{p}-[u]_{s,p}^{p}+\mu\int_{\Omega} \frac{\left|u_{k}\right|^{p}}{|x|^{p \theta}}dx-\mu\int_{\Omega} \frac{|u|^{p}}{|x|^{p \theta}}dx\\
&\quad +\int_{\Omega}\left|u_{k}\right|^{r}dx-\int_{\Omega}\left|u\right|^{r}dx+o_{k}(1) \\
&\quad = \int_{\Omega}\left|\nabla\left(u_{k}-u\right)\right|^{p}dx+\iint_{\mathbb{R}^{2N}} \frac{\left|\left(u_{k}-u\right)(x)-\left(u_{k}-u\right)(y)\right|^{p}}{|x-y|^{N+p s}} d x d y\\
&\quad -\mu \int_{\Omega} \frac{\left|u_{k}-u\right|^{p}}{|x|^{p \theta}}dx +o_{k}(1)\geq \left(1-\frac{\mu}{\bar{\mu}(\theta)}\right)  \|u_k-u\|_{X}+o_k(1).
\end{align*}
This implies $u_{k}\to u$ strongly in $X(\Omega)$ as $k \rightarrow \infty$, completing the proof.
\end{proof}

Now we show that the functional $\mathcal{F}_{\mu, \lambda}$ defined in \eqref{MP functional} verifies the assumptions of Theorem \ref{MP thm}.
\begin{lem}\label{geometry}
    Let $\Omega$ be a bounded domain in $\mathbb{R}^N$. Assume that $\lambda>0$ and $\mu \in (0,\bar{\mu}(\theta))$. Then there exist positive constants $\rho, \alpha>0$ such that 
    
    a) $\mathcal{F}_{\mu, \lambda}(u) \geq \alpha$ for any $u \in X(\Omega)$ with $\|u\|_{X}=\rho$; 
    
    b) There exists $v \in X(\Omega)$ positive such that $\|v\|_{X} > \rho$ and  $\mathcal{F}_{\mu, \lambda}(v) < \alpha$.
\end{lem}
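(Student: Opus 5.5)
The plan is the standard mountain pass geometry argument, built on the mixed interpolated Hardy inequality \eqref{mixed Hardy use} and the Sobolev embedding \eqref{Sobolev emb}. A small remark first: in \eqref{MP functional} the factor $\frac1p$ evidently multiplies only the gradient term. Either way, the Gagliardo term carries a coefficient $c_0\in\{1/p,1\}\ge 1/p$, so the lower bound below holds in both readings. For part a), take $u\in X(\Omega)$ and apply \eqref{mixed Hardy use} to control the Hardy term by the full norm:
\begin{equation*}
\frac1p\|\nabla u\|_p^p + c_0[u]_{s,p}^p-\frac{\mu}{p}\int_\Omega\frac{|u|^p}{|x|^{p\theta}}dx\ \ge\ \frac1p\Big(1-\frac{\mu}{\bar\mu(\theta)}\Big)\|u\|_X^p .
\end{equation*}
For the subcritical term, since $p<r<p^*$, \eqref{Sobolev emb} gives $\int_\Omega|u|^r dx\le C_r^r\|u\|_X^r$. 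Combining the two estimates yields
\begin{equation*}
\mathcal{F}_{\mu,\lambda}(u)\ \ge\ \|u\|_X^p\left(\frac1p\Big(1-\frac{\mu}{\bar\mu(\theta)}\Big)-\frac{\lambda C_r^r}{r}\|u\|_X^{r-p}\right).
\end{equation*}
Since $r>p$ and $\mu<\bar\mu(\theta)$, the bracket is at least $\frac1{2p}(1-\mu/\bar\mu(\theta))$ once $\|u\|_X=\rho$ with $\rho>0$ small. Concretely, take $\rho^{r-p}=\frac{r}{2p\lambda C_r^r}(1-\mu/\bar\mu(\theta))$ and set $\alpha:=\frac{\rho^p}{2p}(1-\mu/\bar\mu(\theta))>0$.

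For part b), fix a nonnegative $\phi\in C_0^\infty(\Omega)$ with $\phi\not\equiv0$, and consider $t\phi$ for $t>0$. Dropping the nonpositive Hardy term gives
\begin{equation*}
\mathcal{F}_{\mu,\lambda}(t\phi)\le t^p\Big(\tfrac1p\|\nabla\phi\|_p^p+[\phi]_{s,p}^p\Big)-\frac{\lambda t^r}{r}\int_\Omega\phi^r dx .
\end{equation*}
Because $r>p$ and $\int_\Omega\phi^r>0$, the right-hand side tends to $-\infty$ as $t\to\infty$. We can therefore pick $t_0$ large so that $\|t_0\phi\|_X=t_0\|\phi\|_X>\rho$ and $\mathcal{F}_{\mu,\lambda}(t_0\phi)<0<\alpha$. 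Then $v:=t_0\phi$ is nonnegative, nontrivial, and has the required properties.

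No step here is genuinely difficult. The only point needing care is part a), where it is essential that $\mu<\bar\mu(\theta)$ so the Hardy term is absorbed with a positive constant, and that the power $r$ is strictly above $p$. The remaining hypotheses of Theorem \ref{MP thm} are already in place: $\mathcal F_{\mu,\lambda}(0)=0$, and Lemma \ref{PS cond} supplies the Palais--Smale condition. This is how the lemma feeds into Theorem \ref{main result thm 1}.
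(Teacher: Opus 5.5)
Your proof is correct and follows the paper's argument. In (a) you absorb the Hardy term via \eqref{mixed Hardy use} with the factor $1-\mu/\bar\mu(\theta)$, bound $\int_\Omega|u|^r\,dx$ by $C_r^r\|u\|_X^r$ and take $\rho$ small; in (b) you send $t\to\infty$ along a fixed nonnegative direction using $r>p$. Your explicit treatment of the ambiguous placement of $\frac1p$ in \eqref{MP functional} (which the paper silently reads as multiplying both terms) is a small improvement, and if ``positive'' is meant as $v>0$ a.e.\ in $\Omega$, you can replace $\phi\in C_0^\infty(\Omega)$ by any such $v_0\in X(\Omega)$ without changing the argument.
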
 
\begin{proof}
    a)  
Since $p<r<p^*,$ applying the embedding \eqref{Sobolev emb}, and the mixed Hardy inequality \eqref{mixed Hardy use} we get
\begin{align*}
\mathcal{F}_{\mu, \lambda}(u) & =\frac{1}{p} \left(\int_{\Omega}\left|\nabla u\right|^{p} dx+\iint_{\mathbb{R}^{2N}}  \frac{|u(x)-u(y)|^{p}}{|x-y|^{N+s p}}dxdy\right)-\frac{\mu}{p} \int_{\Omega} \frac{|u|^{p}}{|x|^{p\theta}} dx-\frac{\lambda}{r} \int_{\Omega} |u|^{r} dx \\
& {\geq} \frac{1}{p}\|u\|_{X}^{p}-\frac{\mu}{p \bar{\mu}}\|u\|_{X}^{p}-\frac{\lambda}{r} \int_{\Omega} |u|^{r} d x\\
& {\geq}\frac{1}{p}\left(1-\frac{\mu}{\bar{\mu}}\right)\|u\|_{X}^{p}-\frac{\lambda C_2}{r}\|u\|_{X}^{r} \\
& {=} \frac{C_1}{p}\|u\|_{X}^{p}-\frac{\lambda C_2}{r}\|u\|_{X}^{r},
\end{align*}
where $C_1, C_2$ are positive constants. As $r>p$, there exists small enough $\rho>0$ such that
$$\alpha:=\frac{C_{1} \rho^{p}}{p}-\lambda\frac{C_{2} \rho^{r}}{r}>0.$$

\noindent Therefore, we have $\mathcal{F}_{\mu, \lambda}(u) \geq \alpha$ for any $u \in X(\Omega)$ with $\|u\|_{X}=\rho$.

b) Fix $v_{0} \in X(\Omega)$ positive such that $\|v_{0}\|_{X}=1$ and take $t>0$. Then we have
\begin{align*}
\mathcal{F}_{\mu, \lambda}(t v_{0}) & =\frac{1}{p}\|t v_{0}\|_{X}^{p}-\frac{\mu}{p} \int_{\Omega} \frac{|t v_{0}|^{p}}{|x|^{p\theta}} dx -\frac{\lambda}{r} \int_{\Omega}|t v_{0}|^{r} d x \\
& \leq \frac{1}{p}\|t v_{0}\|_{X}^{p}-\frac{\lambda}{r} \int_{\Omega}|t v_{0}|^{r} d x  \\
& =\frac{t^{p}}{p}-\frac{\lambda t^{r}}{r} \int_{\Omega} |v_{0}|^{r} dx.
\end{align*}
Again using  $r>p$, we get $\mathcal{F}_{\mu, \lambda}(t v_{0}) \rightarrow-\infty$ as $t \rightarrow+\infty$. Therefore, for sufficiently large $t_0,$ with $v=t_{0} v_{0}$, it follows that $\|v\|_{X}>\rho$ and $\mathcal{F}_{\mu, \lambda}(v) < \alpha$.  
\end{proof}

\begin{proof}[Proof of Theorem \ref{main result thm 1}]
As we proved in Lemmas \ref{PS cond} and \ref{geometry}, the functional $\mathcal{F}_{\mu,\lambda}(u)$ satisfies the geometry of the mountain pass lemma and satisfies the $(\mathrm{PS})_{c}$ condition for all $c\in \mathbb{R}$. Moreover, we have $\mathcal{F}_{\mu,\lambda}(0)=0$. Now we take $v$ as in Lemma \ref{geometry} and define 
$$
c:=\inf_{\gamma \in \Gamma} \sup _{t \in[0,1]} \mathcal{F}_{\mu, \lambda}(\gamma(t)),
$$
with $\Gamma:=\left\{\gamma \in C\left([0,1], X(\Omega)\right): \gamma(0)=0, \gamma(1)=v\right\}.$ Then, by Theorem \ref{MP thm}, $c$ is a critical value of $\mathcal{F}_{\mu, \lambda}(u)$. Consequently, there exists $u \in X(\Omega)$ satisfying
$$\mathcal{F}_{\mu, \lambda}(u)=c\geq \rho > 0, $$
which is a nontrivial weak solution to the problem \eqref{main problem 0.1}.
\end{proof}

\section*{Conflict of interest statement}
On behalf of all authors, the corresponding author states that there is no conflict of interest.

\section*{Data availability statement}
Data sharing is not applicable to this article as no datasets were generated or analyzed during the current study.

\section*{Acknowledgement}
YA is supported by the Bolashak Government Scholarship of the Republic of Kazakhstan.  SG acknowledges financial support under the ARG-MATRICS grant from ANRF, India, (Grant number: ANRF/ARGM/2025/001570/MTR). YA and MR are supported by the Methusalem program of the Ghent University Special Research Fund (BOF) (Grant number 01M01021). MR is also supported by UKRI EPSRC grant (Grant number UKRI3645) and the FWO Senior Research Grant G011522N. This work was completed while VK and SG were visiting the Ghent Analysis \& PDE Center at Ghent University. They gratefully acknowledge the financial support and excellent research facilities provided by the center.


\end{document}